\documentclass[12pt, reqno]{amsart}
\usepackage{amsmath}
\usepackage{amsfonts}
\usepackage{amssymb}
\usepackage{comment}
\usepackage{graphicx}
\usepackage{xcolor}
\usepackage{orcidlink}
\usepackage[margin=1in,footskip=0.25in]{geometry}
\setcounter{MaxMatrixCols}{30}

%
%
%
%
%
%

\providecommand{\U}[1]{\protect\rule{.1in}{.1in}}
\newtheorem{theorem}{Theorem}

\newtheorem{corollary}[theorem]{Corollary}

\newtheorem{definition}[theorem]{Definition}
\newtheorem{example}[theorem]{Example}

\newtheorem{lemma}[theorem]{Lemma}

\newtheorem{remark}[theorem]{Remark}

\begin{document}

\title{Phase retrieval for nilpotent groups}

\author{Hartmut F\"{u}hr \orcidlink{0000-0001-5718-6449}, Vignon Oussa \orcidlink{0000-0003-2686-6470}}

\address{ H.~F{\"u}hr\\
Lehrstuhl f{\"u}r Geometrie und Analysis \\
RWTH Aachen University \\
 D-52056 Aachen\\
 Germany \\ email: fuehr@mathga.rwth-aachen.de}
 
\address{V. Oussa\\ Department of Mathematics \\ Bridgewater State University \\ MA 02234 \\ USA \\ email: vignon.oussa@bridgew.edu}
\begin{abstract}
We study the phase retrieval property for orbits of general irreducible representations of nilpotent groups, for the classes of simply connected connected Lie groups, and for finite groups. We prove by induction that in the Lie group case, all irreducible representations do phase retrieval. 

For the finite group case, we mostly focus on $p$-groups. Here our main result states that every irreducible representation of an arbitrary $p$-group with exponent $p$ and size $\le p^{2+p/2}$ does phase retrieval. 

Despite the fundamental differences between the two settings, our inductive proof methods are remarkably similar. 
\end{abstract}
\maketitle
\global\long\def\with{\,\middle|\,}

\hyphenation{an-iso-tropic}
\noindent \textbf{\small Keywords:}{\small {}  phase retrieval; nilpotent Lie group; finite nilpotent group; $p$-group; group frame;}{\small \par}

\noindent \textbf{\small AMS Subject Classification:}{\small {} 42C15;
42A38; 65T50; 94A12}{\small \par}

\section{Introduction}
Phase retrieval originated in the Fourier-analytic treatment of questions arising in crystallography. The phase retrieval problem for (finite) frames as a problem in mathematical signal processing was introduced in \cite{MR2224902}, with motivation stemming from applications such as speech recognition. Since its introduction phase retrieval has developed into a legitimate branch of applied mathematics, with links to diverse subjects such as harmonic analysis, algebraic geometry, optimization, numerical analysis, etc. We refer to the review papers \cite{MR4094471, MR4189292} and their extensive lists of references.

Despite the general interest in the problem, sources containing explicit constructions of frames with guaranteed phase retrieval properties are still relatively scarce. 
In this paper, we study the construction of such systems using strongly continuous, unitary representations of locally compact groups. Precisely, let $\pi$ be a continuous unitary representation of a locally compact group $G$ acting in some Hilbert space $\mathcal{H}_\pi.$ Fixing a vector $\eta \in \mathcal{H}_\pi,$ we define a continuous linear map $V_\eta : \mathcal{H}_\pi \to C(G)$ as $V_{\eta}(\psi)(g)=\langle \psi, \pi(g) \eta \rangle.$ Next, let  $\mathbb{T}$ denote the group of complex numbers of modulus one, and let $\mathcal{H}_\pi/\mathbb{T}$ denote the orbit space of $\mathcal{H}$ modulo the canonical scalar action of $\mathbb{T}$. We say that $\eta$ \textbf{does phase retrieval} if the map
\[
 \mathcal{A}_\eta : \mathcal{H}_\pi/\mathbb{T} \to C(G)~,~\mathcal{A}_\eta(\mathbb{T} \cdot f) (x) = |V_\eta f(x)|~,x\in G
\] is injective. Similarly, we say that \textbf{$\pi$ does phase retrieval} if there exists a vector $\eta \in \mathcal{H}_\pi$ that does phase retrieval.
It is worth noting that if $\eta \in \mathcal{H}_\pi$ does phase retrieval for $\pi$ then the linear map $V_{\eta}$ is necessarily injective. When this is the case, $V_\eta$ defines a linear bijection onto a subspace $\mathcal{C}_\eta  \subset C(G)$, and the phase retrieval problem is equivalent to the statement that each $F \in \mathcal{C}_\eta$ can be recovered from $|F|$ up to a scalar. This observation serves as motivation for the name `phase retrieval'; the phase of $F \in \mathcal{C}_\eta$ can be recovered from its modulus.  

Even though the phase retrieval problem for finite frames has been studied for over a decade, the bulk of the literature addressing this problem for group frames has only been devoted to some specific classes of groups, such as the finite Heisenberg groups  \cite{MR3500231,MR4222533,MR4094471}. The literature also contains a treatment of the affine Lie group (also known as the ax+b group) over the reals \cite{MR3421917,alaifari2020phase} as well as the finite ax+b group defined over prime fields \cite{bartuselphase}. Recently, the results of \cite{MR3500231} were extended to the case of projective unitary representations of finite abelian groups \cite{MR3901918}. To the best of our knowledge, this is the largest class of general group representations for which the phase retrieval property has been studied. The present work aims to significantly expand the class of available representations. Precisely, we show that the class of groups admitting representations for which phase retrieval is possible includes all simply connected, connected nilpotent Lie groups, as well as all sufficiently small $p$-groups with exponent $p$, for an arbitrary prime $p$. 

The following examples describe the group representations that are probably understood best so far, namely the Schr\"odinger representations of (finite or Lie) Heisenberg groups.  These will turn out to be the cornerstones of our general inductive approach. Throughout the paper, we let $\mathbb{Z}_n = \mathbb{Z}/n \mathbb{Z}$ be the finite cyclic group of order $n$, and denote elements of $\mathbb{Z}_n$ (by slight abuse of notation) by $k=0,\ldots,n-1$. \\



\begin{example} \label{ex:finite_Heisenberg} (The finite Heisenberg group) Fix $n \in \mathbb{N}$, $n \ge 2$, and define the associated finite Heisenberg group by
 \begin{equation} \label{eqn:fH_set}
  \mathbb{H}_n = \mathbb{Z}_n^3
 \end{equation} with group law
 \begin{equation} \label{eqn:fH_law}
  (k,l,m)\cdot(k',l',m') = (k+k',l+l',m+m'-l'k)~,
 \end{equation} and arithmetic operations taken modulo $n$. The center of this group is given by $Z(\mathbb{H}) = \{ 0 \} \times \{ 0 \} \times \mathbb{Z}_n$, and $\mathbb{H}/Z(\mathbb{H}) \cong \mathbb{Z}_n \times \mathbb{Z}_n.$ Next, define the \textbf{Schr\"odinger representation} of $\mathbb{H}_n$ acting on $\mathbb{C}^n$ by first introducing the translation and modulation operators $T_k, M_l$ respectively as
 \[
  (T_k f)(y) = f(y-k)~,~(M_l f)(y) = e^{2 \pi i ly/n} f(y) ~.
 \]
Let $ \pi(k,l,m) = e^{2 \pi i m/n} M_l T_k~.$ Explicitly, 
 \[
  (\pi(k,l,m)f)(y) = e^{2 \pi im/n} e^{2 \pi i ly/n} f(y-k)~,k,l,m,y \in \{0,\ldots,n-1 \}~.
 \]  
This representation is known to be irreducible and also does phase retrieval \cite{MR3500231}. To construct a vector $\eta$ that does phase retrieval for the representation $\pi,$ define the \textit{ambiguity function} of $\eta,$ as follows:
\[
 A_\eta(k,l) = \langle \eta, M_l T_k \eta \rangle~.
\]
It is shown in \cite{MR3500231,MR3901918} that any vector $\eta$  for which the zero set of the matrix coefficient $A \eta$ is empty does phase retrieval. 

Note that the existence of vectors with nonvanishing ambiguity function is established in Proposition 2.1 of \cite{MR3500231}.
\end{example}

\begin{example} \label{ex:Heisenberg_Lie} (The Heisenberg Lie group) Define the three-dimensional Heisenberg Lie group $\mathbb{H}$ as 
$\mathbb{H} = \mathbb{R}^3$, with group law 
\begin{equation}
(p,q,r) (p',q',r') = (p+p',q+q',r+r'+pq')~. 
\end{equation}
$\mathbb{H}$ is a simply connected, connected Lie group with center $\{ 0 \}\times \{ 0 \} \times \mathbb{R} \subset \mathbb{H}$. 
If we define the continuous-domain translation and modulation operators as $(T_p f)(y) = f(y-p)$, $(M_q f)(y) = e^{2 \pi i q y} f(y)$, acting on $f \in L^2(\mathbb{R})$, then $ \pi(p,q,r) = e^{2 \pi i r }  M_q T_p$ defines an irreducible, unitary representation $\pi$ of $\mathbb{H}$ on $L^2(\mathbb{R}).$ The representation $\pi$ is often referred to as the \textbf{Schr\"odinger representation}, and as its finite counterpart, it does phase retrieval, with an analogous sufficient criterion for vectors guaranteeing that property: Defining the ambiguity function of $\eta \in L^2(\mathbb{R})$ as 
\[
A_\eta (p,q) = \langle \eta, \pi(p,q,0) \eta \rangle~,
\] we have that $\eta$ does phase retrieval if $A_\eta$ vanishes only on a set of Lebesgue measure zero. A prominent example of a window $g$ with that property is provided by the Gaussian. 
\end{example}

\subsection{Main contributions of our paper}

In this paper, we intend to address the problem of phase retrieval for general irreducible representations of nilpotent groups. Our main results in this direction are the following. 
\begin{itemize}
    \item Any irreducible representation of an arbitrary simply connected, connected nilpotent Lie group does phase retrieval (Theorem \ref{thm:main_1}).
       \item Explicitly checkable sufficient criteria for the phase retrieval property of irreducible representations of $p$-groups (Corollary \ref{cor:p_groups_general}).
    \item As a corollary to the previous result: Any irreducible representation of an arbitrary $p$-group with exponent $p$ and size $\le p^{2+p/2}$ does phase retrieval (Corollary \ref{cor:small_p_groups}).
\end{itemize}

The techniques we use to establish these results seem of potentially independent interest. In particular, the treatment of the $p$-group case is surprisingly analogous to the Lie group case, due to a $p$-group version of Kirillov's lemma that appears to be new (Lemma \ref{Kirillov}), and an associated explicit construction method for irreducible representations (Lemma \ref{lem:exists_K_triple}). Under suitable assumptions on the underlying groups, this approach establishes a close relationship between irreducible representations of $p$-groups and finite Schr\"odinger representations that we systematically exploit for the study of phase retrieval. 

As a second technical novelty of potential independent interest, we introduce the quantity $p_0(\pi)$ (see Definition \ref{defn:p0}), which is an upper bound on the proportion of zeros in a nonzero matrix coefficient associated to $\pi$. Our results estimating $p_0(\pi)$, specifically Lemma \ref{lem:est_p0}, serve as further illustration of the usefulness of the $p$-group analog of the Kirillov lemma and its representation-theoretic ramifications. 

\section{Nilpotent Lie groups}

This section establishes phase retrieval for irreducible representations of simply connected, connected nilpotent Lie groups. Before we turn to groups and representations, we first state a few basic observations and ideas regarding phase retrieval, which will be repeatedly useful. We expect them to be well-known, but include short arguments for lack of a handy reference. 

\begin{definition}
Let $\mathcal{H}$ denote a Hilbert space. We say that $(x_i)_{i \in I} \subset \mathcal{H}$ \textbf{does phase retrieval} if 
\[
\forall f,g \in \mathcal{H}~:~ \left( \forall i \in I: |\langle f, x_i \rangle| =  |\langle g, x_i \rangle| \right) \Leftrightarrow f \in \mathbb{T} \cdot g ~.
\]
\end{definition}

With this definition, a unitary representation $\pi$ of a locally compact group does phase retrieval iff there exists a vector $\eta \in \mathcal{H}_\pi$ such that $(\pi(x) \eta)_{x \in G}$ does phase retrieval. Part (b) of the following lemma has been observed e.g. in \cite{MR4091190}. 

\begin{lemma} \label{lem:basics_pr}
Let $\mathcal{H}$ denote a Hilbert space and $(x_i)_{i \in I} \subset \mathcal{H}$.
\begin{enumerate}
    \item[(a)] If $(x_i)_{i \in I}$ does phase retrieval, then it has dense span in $\mathcal{H}$.
    \item[(b)] If $(x_i)_{i \in I}$ does phase retrieval, and $f,g \in \mathcal{H} \setminus \{ 0 \}$ are arbitrary, there exists $i \in I$ such that 
    \[
    \langle f, x_i \rangle \langle g, x_i \rangle \not= 0~. 
    \]
    \item[(c)] Let $(y_j)_{j \in J}$ be related to $(x_i)_{i \in I}$ by the following property: There exists a map $\kappa: J \to I$ such that for all $j \in J$
    \[
    y_j \in \mathbb{T} x_{\kappa(j)}
    \] and in addition, 
    \[
    \forall i \in I\exists j \in J~:~ x_i \in \mathbb{T} x_{\kappa(j)}~.
    \] Then $(x_i)_{i \in I}$ does phase retrieval iff $(y_j)_{j \in J}$ does. 
\end{enumerate}
\end{lemma}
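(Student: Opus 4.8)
The plan is to treat the three parts in order, since (a) will feed into the proof of (b), and (c) is essentially a bookkeeping argument.

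\medskip

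\textbf{Part (a).} First I would argue by contraposition. Suppose the closed span $\mathcal{V} = \overline{\mathrm{span}}\{x_i : i \in I\}$ is a proper subspace of $\mathcal{H}$. Pick a nonzero vector $g \in \mathcal{V}^\perp$. Then for every $i \in I$ we have $\langle g, x_i \rangle = 0 = \langle 0, x_i\rangle$, so the phase retrieval property would force $g \in \mathbb{T} \cdot 0 = \{0\}$, a contradiction. (Technically one should note that the phase retrieval implication applies with $f = 0$, $g \neq 0$; since $|\langle 0, x_i\rangle| = |\langle g, x_i\rangle|$ for all $i$ but $0 \notin \mathbb{T} \cdot g$, this already shows the family cannot separate $0$ from $g$ unless $\mathcal{V}^\perp = \{0\}$.) So $(x_i)$ doing phase retrieval forces $\mathcal{V} = \mathcal{H}$.

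\medskip

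\textbf{Part (b).} Here I would argue by contradiction: suppose there are nonzero $f, g$ with $\langle f, x_i\rangle \langle g, x_i\rangle = 0$ for all $i$, i.e.\ $I = I_f \cup I_g$ where $I_f = \{i : \langle f, x_i\rangle = 0\}$ and $I_g = \{i : \langle g, x_i\rangle = 0\}$. The idea is to cook up two vectors with equal moduli of coefficients that are not scalar multiples of one another. The natural candidates are $f + c g$ and $f - c g$ for a suitable scalar $c \neq 0$: for $i \in I_g$ both equal $f$ on that coefficient, and for $i \in I_f$ the coefficients are $c\langle g, x_i\rangle$ and $-c\langle g, x_i\rangle$, which have equal modulus. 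Hence $|\langle f + cg, x_i\rangle| = |\langle f - cg, x_i\rangle|$ for all $i \in I$. Phase retrieval then forces $f + cg = \tau (f - cg)$ for some $\tau \in \mathbb{T}$. I would then choose $c$ so as to derive a contradiction with $f, g$ both nonzero: the relation gives $(1 - \tau) f = -(1+\tau) c g$; if $\tau \neq 1$ this says $f \in \mathbb{C} g$, say $f = \lambda g$, and then (by part (a), or directly) we can pick $i$ with $\langle g, x_i \rangle \neq 0$, forcing $\langle f, x_i\rangle \neq 0$ too, contradicting $I = I_f \cup I_g$ together with $f = \lambda g \neq 0$ (any $i$ would have to kill both). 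If $\tau = 1$ then $cg = 0$, again impossible. So no such $f, g$ exist. I should double-check the edge case where one of $I_f, I_g$ is empty, but that is even easier. The mild subtlety — the part I'd be most careful about — is making the choice of $c$ genuinely unnecessary (it is: any $c\neq 0$ works) and handling the dichotomy on $\tau$ cleanly; this is the only place any real thought is needed.

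\medskip

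\textbf{Part (c).} This is purely formal. The hypotheses say that the sets $\{\mathbb{T} y_j : j \in J\}$ and $\{\mathbb{T} x_i : i \in I\}$, viewed as subsets of $\mathcal{H}/\mathbb{T}$, coincide: every $\mathbb{T} y_j$ equals $\mathbb{T} x_{\kappa(j)}$, and conversely every $\mathbb{T} x_i$ is hit. Since $|\langle f, x\rangle|$ depends only on the class $\mathbb{T} x$, the condition ``$|\langle f, x_i\rangle| = |\langle g, x_i\rangle|$ for all $i \in I$'' is equivalent to ``$|\langle f, y_j\rangle| = |\langle g, y_j\rangle|$ for all $j \in J$'' — the first implies the second because each $y_j$ is a unimodular multiple of some $x_i$, and the second implies the first because each $x_i$ is a unimodular multiple of some $y_{\kappa(j)}$ with $x_i \in \mathbb{T} x_{\kappa(j)}$, hence $|\langle f, x_i\rangle| = |\langle f, x_{\kappa(j)}\rangle| = |\langle f, y_j\rangle|$ and likewise for $g$. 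Therefore the two families define the same equivalence relation on $\mathcal{H}$, and one does phase retrieval iff the other does. I expect essentially no obstacle in (c); the only genuine content of the lemma is in (b).
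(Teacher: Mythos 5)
Your proposal is correct and follows essentially the same route as the paper: part (a) via orthogonality to the family, part (b) via the disjoint-support trick applied to $f\pm cg$, and part (c) by noting that $|\langle f,x\rangle|$ depends only on the $\mathbb{T}$-orbit of $x$. The only cosmetic difference is in (b), where the paper first shows $f$ and $g$ are linearly independent (so that $f+g\notin\mathbb{T}(f-g)$) and then exhibits the violation, whereas you assume the relation $f+cg=\tau(f-cg)$ and run the case analysis on $\tau$ to a contradiction; both arguments are complete and equivalent.
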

\begin{proof}
Part (a) follows from the fact that any $f$ orthogonal to all $x_i$ must be in $\mathbb{T} \cdot 0 = \{ 0 \}$.

For part (b), assuming the existence of $f,g \in  \mathcal{H} \setminus \{ 0 \}$ with 
\[
 \forall i \in I~:~\langle f, x_i \rangle \langle g, x_i \rangle = 0~,
\] we first note that by part (a), the fact that $f,g$ are both nonzero implies that the associated coefficient families $(\langle f, x_i \rangle)_{i \in I}$ and $(\langle g,x_i \rangle)_{i \in I}$ are. By assumption, these coefficient families are disjointly supported. Hence they are linearly independent, and accordingly, $f$ and $g$ are linearly independent. In particular, $f + g \not\in \mathbb{T} (f-g)$. On the other hand, the disjointness assumption entails for all $i \in I$
\[
\left| \langle f+g, x_i \rangle \right| = \left| \langle f, x_i \rangle \right| + |\left \langle g, x_i \rangle \right| =  \left| \langle f-g, x_i \rangle \right|~,
\] which contradicts the phase retrieval property. 

For part (c) we note that the assumptions on the two families allow to recover the coefficient family $(\langle f, x_i \rangle)_{i \in I}$ from the coefficient family $(\langle f, y_j \rangle)_{j \in J}$, and the same holds for the families of coefficient moduli. Also, these observations remain true if the families $(x_i)_{i \in I}$ and $(y_j)_{j \in J}$ are interchanged. These facts then imply the equivalence in (c).  
\end{proof}

We next turn to Hilbert space tensor products and their realization via Hilbert-Schmidt operators. 
First some notation. Let $\mathcal{H}_1,\mathcal{H}_2$ denote nontrivial Hilbert spaces, and $HS(\mathcal{H}_2,\mathcal{H}_1)$ the space of Hilbert-Schmidt operators $A : \mathcal{H}_2 \to \mathcal{H}_1$, endowed with the usual scalar product 
\[
\langle A, B \rangle = {\rm trace}(AB^*)~.
\]
Given $x \in \mathcal{H}_1,y \in \mathcal{H}_2$, let 
\[
x \otimes y \in HS(\mathcal{H}_2,\mathcal{H}_1)~:~ \mathcal{H}_2 \ni z \mapsto \langle z,y \rangle x~. 
\] 

With this terminology in place, we can formulate the following basic observation. 
\begin{lemma} \label{lem:pr_HS}
Let $(x_i)_{i \in I} \subset \mathcal{H}_1$, $(y_j)_{j \in J} \subset \mathcal{H}_2$ denote two systems of vectors with the phase retrieval property. Then $(x_i \otimes y_j)_{(i,j) \in I \times J}$ has the phase retrieval property for $HS(\mathcal{H}_2,\mathcal{H}_1)$.
\end{lemma}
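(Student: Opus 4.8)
The plan is to translate the asserted phase retrieval property into a statement about the two individual systems via the standard identification of the Hilbert-Schmidt inner product with a rank-one operator: for $A \in HS(\mathcal{H}_2,\mathcal{H}_1)$, $x \in \mathcal{H}_1$, $y \in \mathcal{H}_2$ one has $\langle A, x \otimes y \rangle = \langle A y, x \rangle$. So suppose $A, B \in HS(\mathcal{H}_2,\mathcal{H}_1)$ satisfy $|\langle A, x_i \otimes y_j \rangle| = |\langle B, x_i \otimes y_j \rangle|$ for all $(i,j) \in I \times J$, i.e. $|\langle A y_j, x_i \rangle| = |\langle B y_j, x_i \rangle|$ for all $i,j$. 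Fixing $j$ and letting $i$ vary, the phase retrieval property of $(x_i)_{i \in I}$ forces $A y_j \in \mathbb{T} \cdot B y_j$; that is, for each $j$ there is $c_j \in \mathbb{T}$ with $A y_j = c_j B y_j$ (taking $c_j = 1$ when $B y_j = 0$, in which case $A y_j = 0$ as well).

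The key step is then to transpose this relation. For any fixed $x \in \mathcal{H}_1$ and any $j \in J$,
$\langle y_j, A^* x \rangle = \langle A y_j, x \rangle = c_j \langle B y_j, x \rangle = c_j \langle y_j, B^* x \rangle$,
and since $|c_j| = 1$ we obtain $|\langle y_j, A^* x \rangle| = |\langle y_j, B^* x \rangle|$ for all $j \in J$. Now the phase retrieval property of $(y_j)_{j \in J}$ applies to the pair $A^* x, B^* x \in \mathcal{H}_2$ and yields $A^* x \in \mathbb{T} \cdot B^* x$ for \emph{every} $x \in \mathcal{H}_1$.

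It remains to upgrade this pointwise relation to a global one: I claim $A^* = c B^*$ for a single $c \in \mathbb{T}$, whence $A = c B \in \mathbb{T} B$, which is exactly the phase retrieval property of $(x_i \otimes y_j)_{(i,j)}$ for $HS(\mathcal{H}_2,\mathcal{H}_1)$. If $B^* = 0$ then $A^* x \in \mathbb{T} \cdot 0 = \{0\}$ for all $x$, so $A^* = 0$ and $c = 1$ works; note also that the convention $\mathbb{T} \cdot 0 = \{0\}$ built into the relation gives $\ker B^* \subseteq \ker A^*$ in general. Otherwise pick $x_0 \notin \ker B^*$ and let $c_0 \in \mathbb{T}$ be determined by $A^* x_0 = c_0 B^* x_0$; for an arbitrary $x$ with $B^* x \neq 0$, if $B^* x$ and $B^* x_0$ are linearly independent then applying the relation to $x_0 + x$ and comparing coefficients against the linearly independent pair $B^* x_0, B^* x$ shows the scalar attached to $x$ equals $c_0$, while if $B^* x = \mu B^* x_0$ then $x - \mu x_0 \in \ker B^* \subseteq \ker A^*$, so $A^* x = \mu A^* x_0 = c_0 B^* x$. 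In all cases $A^* x = c_0 B^* x$.

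I expect this last upgrading step — and specifically the degenerate case in which $B^* x$ is parallel to $B^* x_0$, which is precisely where the kernel inclusion $\ker B^* \subseteq \ker A^*$ is needed — to be the only part requiring real care; the rest is one short identity together with two applications of the hypotheses, and the trivial cases $A = 0$ or $B = 0$ need no separate discussion since they are subsumed by the final step.
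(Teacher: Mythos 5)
Your proof is correct. It opens exactly as the paper's does: rewrite $\langle A, x_i\otimes y_j\rangle=\langle Ay_j,x_i\rangle$, fix $j$, and use phase retrieval of $(x_i)_{i\in I}$ to get $Ay_j=c_jBy_j$ with $c_j\in\mathbb{T}$. From there the two arguments diverge in how they glue the phases $c_j$ into a single constant. The paper applies phase retrieval of $(y_j)_{j\in J}$ only to the pairs $(A^*x_i,B^*x_i)$ coming directly from the hypothesis, obtaining $A^*x_i=u(i)B^*x_i$, and then invokes Lemma \ref{lem:basics_pr}(b) to produce, for any $j,j'$ with $Ay_j,Ay_{j'}\neq 0$, a common index $i$ at which both coefficients are nonzero; cancellation gives $c_j=\overline{u(i)}=c_{j'}$, and the dense-span property (Lemma \ref{lem:basics_pr}(a)) finishes. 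You instead exploit that the vector identity $Ay_j=c_jBy_j$ with $|c_j|=1$ forces $|\langle A^*x,y_j\rangle|=|\langle B^*x,y_j\rangle|$ for \emph{every} $x\in\mathcal{H}_1$, not just the frame vectors; phase retrieval of $(y_j)_{j\in J}$ then yields the pointwise relation $A^*x\in\mathbb{T}\cdot B^*x$ on all of $\mathcal{H}_1$, and you upgrade this to $A^*=c_0B^*$ by the standard linear-algebra device (testing $x_0+x$ against a linearly independent pair, and handling the parallel case via the kernel inclusion $\ker B^*\subseteq\ker A^*$, which you correctly isolate as the delicate point). This trades the paper's combinatorial common-support lemma for elementary linear algebra; both routes are complete, and yours has the mild bonus of establishing the proportionality of $A^*$ and $B^*$ on the whole space rather than only on the frame. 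One cosmetic slip: $A^*=c_0B^*$ gives $A=\overline{c_0}B$ rather than $c_0B$, which is immaterial since $\overline{c_0}\in\mathbb{T}$.
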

\begin{proof}
Let $A,B \in HS(\mathcal{H}_2,\mathcal{H}_1)$ with 
\[
\forall (i,j) \in I \times j~:~|\langle A, x_i \otimes y_j \rangle| = |\langle B, x_i \otimes y_j \rangle|
\]
Note that the operator scalar products are computed as 
\[
\langle A, x_i \otimes y_j \rangle = \langle A y_j, x_i \rangle~.
\]
Fixing $j \in J$, this gives for all $i \in I$:
\[
|\langle A y_j , x_i \rangle| = |\langle B y_j, x_i \rangle|~. 
\]
The phase retrieval property for $(x_i)_{i \in I}$ then entails 
\[
A y_j = v(j) B y_j~,~ \forall j \in J~,
\] with suitable $v(j) \in \mathbb{T}$. We next prove that $v$ is constant on the set
\[
S = \{ j \in J~:~ A y_j \not= 0 \}~. 
\]
To see this, we fix $i \in I$, and get for all $j \in J$ that
\[
|\langle y_j, A^* x_i \rangle| = |\langle y_j, B^* x_i \rangle|~.
\] Hence the phase retrieval property of $(y_j)_{j \in J}$ furnishes a map $u : I \to \mathbb{T}$ with 
\begin{equation} \label{eqn:A_phase_factor}
A^* x_i = u(i) B^* x_i~,~ \forall i \in I~.
\end{equation} Now assume that $j,j' \in S$ are given. Then there exists $i \in I$ such that 
\begin{equation} \label{eqn:int_supps_tensor}
\langle A y_j, x_i \rangle \langle A y_{j'},x_i \rangle \not= 0 ~,
\end{equation} by Lemma \ref{lem:basics_pr} (b), and definition of $S$. This entails 
\[
\langle A y_j, x_i \rangle = v(j) \langle B y_j, x_i \rangle =  v(j) u(i) \langle A y_j, x_i \rangle ~,
\] and the choice of $i$ allows to cancel $\langle A y_j, x_i \rangle$ from this equation to obtain $v(j) = \overline{u(i)}$. The same reasoning can be applied to $v(j')$, leading to $v(j') = \overline{u(i)} = v(j)$. Plugging this observation into equation (\ref{eqn:A_phase_factor}) provides $v \in \mathbb{T}$ satisfying
\[
\forall j \in J~:~  A y_j = vB y_j
\] and Lemma \ref{lem:basics_pr} (a) now applies to yield $A = vB$.
\end{proof}

We now turn to groups and representations. 
Before we state the main result of this section, we highlight several representation-theoretic facts relevant to the problem at hand (see Remark \ref{rem:pr_factor_group} and Lemma \ref{lem:ind_char}.) Given a locally compact group $G$, we let $\widehat{G}$ denote its unitary dual, i.e. the set of equivalence classes of irreducible unitary representations, and $\pi \in \widehat{G}$ will be shorthand for ``$\pi$ is irreducible''. Throughout the paper, we will repeatedly factor out suitably chosen normal subgroups. One such subgroup of particular interest is the \textit{projective kernel} of a unitary representation $\pi$, which is given by
\[
K_\pi = \{ x \in G : \pi(x) \in \mathbb{T} \cdot {\rm id}_{\mathcal{H}_\pi} \}~. 
\] This is a closed normal subgroup of $G$ containing the kernel of $\pi$.

\begin{remark} \label{rem:pr_factor_group}
 Let $G$ denote a locally compact group, and let $\pi$ be a unitary representation of $G$. Assume that $K \lhd G$ is a normal subgroup contained in the projective kernel of $\pi$. Then $\pi$ induces a projective representation $\overline{\pi}$ of the quotient group $G/K$, by picking a cross-section $\sigma : G/K \to G$ and letting $\overline{\pi}(xK) = \pi(\sigma(xK))$. Now Lemma \ref{lem:basics_pr}(c) shows that the system $(\pi(x) \eta)_{x \in G}$ does phase retrieval iff the system $(\pi(\sigma(x))_{xK \in G/K}$ does. 

 This observation applies to two cases of particular interest: Firstly, and trivially, we may always factor out normal subgroups $H$ contained in the kernel, and pass the action down to a representation of $G/H$. Moreover, if $\pi$ is irreducible and $Z < G$ is a central subgroup, then $\pi(Z) \subset \mathbb{T} \cdot {\rm id}_{\mathcal{H}_\pi}$ follows by Schur's lemma. Therefore $\pi$ does phase retrieval iff the associated projective representation of $G/Z$ does. 
\end{remark}

\begin{remark}
At this point, let us shortly explain why we focus on irreducible representations. This focus has a long tradition in representation theory, due to the fact that typically, an understanding of the irreducible cases is fundamental to an understanding of more complex representations. The general intuition is that a decomposition into irreducible subrepresentations allows to reduce the general problem into a family of subproblems, which can then be addressed independently. 

In the context of the phase retrieval, this intuition works to a limited degree. If $(\pi,\mathcal{H}_\pi)$ is a representation, and $\mathcal{H}_\pi$ decomposes into two closed invariant subspaces, $\mathcal{H}_\pi = \mathcal{K}_1 \oplus \mathcal{K}_2$, then any vector $\eta \in \mathcal{H}_\pi$ doing phase retrieval for $\pi$ gives rise to two vectors $\eta_1 \in \mathcal{K}_1,\eta_2 \in \mathcal{K}_2$ such that the restriction of $\pi$ to $\mathcal{K}_i$ does phase retrieval for $\mathcal{K}_i$, by taking $\eta_i$ to be the orthogonal projection of $\eta$ onto $\mathcal{K}_i$.

The standard representation-theoretic intuition would suggest that this observation also works for the converse direction, at least under suitable assumptions on the subrepresentations associated to the $\mathcal{K}_i$. Specifically, assuming that the subrepresentations are irreducible and inequivalent, one might be tempted to expect that a vector $\eta_i$ doing phase retrieval for each $\mathcal{K}_i$ gives rise to a vector $\eta = \eta_1 + \eta_2$ that does phase retrieval for $\mathcal{H}_{\pi}$. This is generally false, as the following example illustrates. 

Pick an irreducible representation $\sigma$ such that the contragredient representation $\overline{\sigma}$ is inequivalent to $\sigma$. Such representations exist, e.g. for the Heisenberg groups from Examples \ref{ex:finite_Heisenberg} (when $n>2$) and \ref{ex:Heisenberg_Lie}. Assume that $\eta_1 \in \mathcal{H}_\sigma$ is a vector doing phase retrieval for $\sigma$. If we then realize $\overline{\sigma}$ on $\mathcal{H}_\sigma$ in the canonical manner, the associated matrix coefficients are related by
\[
V_\eta^{\overline{\sigma}} f = \overline{V_\eta^\sigma f}~;
\]
this can be understood as the defining relation of the contragredient representation. 
On the one hand, this shows that $\eta$ also does phase retrieval for $\overline{\sigma}$. However, if we now define $\pi = \sigma \oplus \overline{\sigma}$, and use the vector $\eta_0 = \eta \oplus \eta$, we get 
\[
\left| V_{\eta_0}^\pi (f \oplus 0) \right| = \left| V^\sigma_\eta f \right| = \left| V_{\eta_0}^\pi (0 \oplus f) \right|~, 
\]
hence $\eta_0$ does not do phase retrieval. 

To summarize: Understanding the phase retrieval problem for irreducible representations will not lead to automatic solutions for arbitrary representations, but it remains a fundamental first step, that we address in the following for nilpotent groups. 
\end{remark}

We next need a representation-theoretic lemma that is quite probably folklore. It can be proved by direct calculations using the realization of induced representations via cross-sections, see e.g. page 79 of \cite{MR3012851}.

\begin{lemma}
 \label{lem:ind_char} Let $Z<N<G$ be locally compact groups, with $N$ normal and closed in $G$, and $Z$ central and closed in $G$. Let $\tau$ be a representation of $N$ with the property that the restriction $\tau|_Z$ is a multiple of a character $\chi \in \widehat{Z}$. Then the restriction $\left( {\rm ind}_N^G \tau \right)|_Z$ is also a multiple of $\chi$. 
\end{lemma}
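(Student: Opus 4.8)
The plan is to work with a concrete model of ${\rm ind}_N^G\tau$ and to exploit that a central element $z\in Z$ acts trivially on the homogeneous space $G/N$, which trivializes the Radon--Nikodym cocycle appearing in the induced action and reduces everything to the covariance relation that defines the induced space.

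First I would fix the realization. Recall that ${\rm ind}_N^G\tau$ can be realized on the Hilbert space $\mathcal{H}$ obtained by completing the space of (suitably measurable, square-integrable modulo $N$) functions $f:G\to\mathcal{H}_\tau$ satisfying the covariance condition $f(gn)=\tau(n)^{-1}f(g)$ for all $g\in G$, $n\in N$, with norm given by integrating $gN\mapsto\|f(g)\|^2$ against a quasi-invariant measure $\mu$ on $G/N$; the action is
\[
\bigl({\rm ind}_N^G\tau\bigr)(y)f(g)=\rho(y,gN)^{1/2}\,f(y^{-1}g)~,
\]
where $\rho(y,\cdot)$ is the Radon--Nikodym derivative of the translate $y\cdot\mu$ with respect to $\mu$. (The cross-section description referenced on p.~79 of \cite{MR3012851} is the same object transported to $\mathcal{H}_\tau$-valued functions on $G/N$ via a Borel section $\sigma$, and leads to an identical computation below.)

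Next I would observe that $z\in Z$ acts trivially on $G/N$: since $z$ is central and lies in $N$, for any $g\in G$ we have $z^{-1}gN=g\,(g^{-1}z^{-1}g)\,N=gz^{-1}N=gN$. Hence $z\cdot\mu=\mu$ and $\rho(z,\cdot)\equiv 1$. Plugging this in and using centrality of $z$ together with the covariance relation (with $n=z^{-1}$), for every $f\in\mathcal{H}$, $g\in G$ and $z\in Z$ one gets
\[
\bigl({\rm ind}_N^G\tau\bigr)(z)f(g)=f(z^{-1}g)=f\bigl(g\cdot g^{-1}z^{-1}g\bigr)=f(gz^{-1})=\tau(z)f(g)~.
\]
By hypothesis $\tau|_Z$ is a multiple of $\chi$, i.e. $\tau(z)=\chi(z)\,{\rm id}_{\mathcal{H}_\tau}$, so $\bigl({\rm ind}_N^G\tau\bigr)(z)f=\chi(z)f$ for all $f\in\mathcal{H}$. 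Therefore $\bigl({\rm ind}_N^G\tau\bigr)(z)=\chi(z)\,{\rm id}_{\mathcal{H}}$ for all $z\in Z$, which is exactly the assertion that $\bigl({\rm ind}_N^G\tau\bigr)|_Z$ is a multiple of $\chi$.

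The only real care needed — and hence the one mild obstacle — is bookkeeping around the chosen induction convention: making the covariance rule and the action formula mutually consistent, and checking that the Radon--Nikodym factor is genuinely $1$ for central $z$. In the cross-section picture this amounts to noting that the cocycle $\tau\bigl(\sigma(z^{-1}x)^{-1}z^{-1}\sigma(x)\bigr)$ specializes, for central $z$, to $\tau\bigl(\sigma(x)^{-1}z^{-1}\sigma(x)\bigr)=\tau(z^{-1})$, again independent of $x$ and giving the same conclusion.
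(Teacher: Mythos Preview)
Your argument is correct and is essentially the approach the paper has in mind: the paper does not spell out a proof but merely points to the cross-section realization of ${\rm ind}_N^G\tau$ (p.~79 of \cite{MR3012851}) and calls the computation ``direct''. Your function-space computation, together with the cross-section remark at the end, is exactly that direct calculation; the key point in either picture is that a central $z\in Z\subset N$ fixes every coset $gN$, so the modular factor is $1$ and the covariance relation collapses the action to $\tau(z)=\chi(z)\,{\rm id}$.
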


We aim to prove the existence of vectors doing phase retrieval, for an irreducible representation $\pi$ of a simply connected, connected nilpotent Lie group $G$. The basic idea is to use induction over $\dim(G)$, and to employ Mackey's theory of induced representations in the process. For the induction step to work, we need a somewhat stronger induction hypothesis on the vectors doing phase retrieval. The relevant additional property is defined next:
\begin{definition}
Let $\pi$ denote a unitary representation of the locally compact group $G$. $\eta \in \mathcal{H}_\pi$ has the \textbf{full support property} if for all $0 \not= f \in \mathcal{H}_\pi$ the set
\[
\{ x \in G : V_\eta f (x) = 0 \} \subset G
\] has Haar measure zero. 
\end{definition}

\begin{remark} \label{rem:fs_factor_group}
 Just as for the case of phase retrieval in Remark \ref{rem:pr_factor_group}, the full support property can equivalently be verified for the associated projective representation $\overline{\pi}$ of $G/K$, where $K$ is any normal subgroup contained in the projective kernel of $\pi$. This observation makes use of the fact that the Haar measures of $G/K$ and $G$ are related by Weil's integral formula. 
\end{remark}

\begin{remark} \label{rem:fs_Heisenberg}
While the full support property has a strictly auxiliary role in our considerations, it seems of independent representation-theoretic interest. We are currently not aware of any sources investigating this property in any systematic way.  

As the prime example of a vector with the full support property, we mention the Gaussian window 
\[
\eta (x) = e^{-\pi x^2}~,(x \in \mathbb{R})~.
\]
It is well-known that the image of the short-time Fourier transform $STFT_\eta$ associated to $\eta$, defined by 
\[
STFT_\eta f (x+iy) = \langle f, M_yT_x \eta \rangle~,
\]
consists of functions whose product with a suitable, fixed choice of two-dimensional Gaussian is entire (see e.g. Proposition 3.4.1 of \cite{MR1843717}). But clearly $STFT_\eta f(x+iy) = V_\eta f(x,y,0)$, where the right-hand side denotes the matrix coefficient associated to the Schr\"odinger representation. Hence the fact that nonzero entire functions are nonzero a.e. (together with Remark \ref{rem:fs_factor_group}) allows to conclude that $\eta$ has the full support property for the Schr\"odinger representation. 

Note also that any vector $\eta$ with the full support property for the Schr\"odinger representation automatically does phase retrieval; recall from Example \ref{ex:Heisenberg_Lie} that a.e. nonvanishing of $V_\eta \eta$ was sufficient for the latter. Hence the Gaussian hass both desired properties.
\end{remark}

We are now in position to establish the first main result of this paper.
\begin{theorem} \label{thm:main_1}
 Let $G$ denote a simply connected, connected nilpotent Lie group, and $\pi$ an irreducible representation of $G$. Then there exists a vector $\eta \in \mathcal{H}_\pi$ that does phase retrieval, and has the full support property. In particular, $\pi$ does phase retrieval. 
\end{theorem}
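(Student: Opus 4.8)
The plan is to induct on $n=\dim G$. The base case is $G$ abelian (in particular $n\le 1$): then $\pi$ is one-dimensional, $\mathcal H_\pi\cong\mathbb C$, and any unit vector $\eta$ satisfies $|V_\eta f(x)|\equiv|f|$, so it trivially has the full support property and does phase retrieval. For the induction step assume $n\ge 2$ with $G$ non-abelian, and let $\chi$ be the central character of $\pi$ on $Z(G)$ (Schur). If $\chi$ is trivial on some nontrivial connected central subgroup $Z_1\le Z(G)$ — automatic when $\dim Z(G)\ge 2$ or when $\chi$ is trivial — then $Z_1\subseteq\ker\pi\subseteq K_\pi$, and by Remarks \ref{rem:pr_factor_group} and \ref{rem:fs_factor_group} it suffices to treat the representation $\overline\pi$ of $G/Z_1$, a simply connected connected nilpotent Lie group of dimension $n-1$, to which the induction hypothesis applies. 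So we may assume $Z(G)\cong\mathbb R$, $\chi$ nontrivial, and (since in a connected nilpotent Lie group every nontrivial closed connected normal subgroup meets the center) $\pi$ faithful.

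In this remaining case I would invoke Kirillov's lemma: there exist $X,Y\in\mathfrak g$ with $[X,Y]$ spanning $\mathfrak z(\mathfrak g)$, giving a Heisenberg subgroup $\mathbb H=\exp(\mathbb R X\oplus\mathbb R Y\oplus\mathfrak z(\mathfrak g))\le G$, together with a codimension-one ideal $G_0\lhd G$ (e.g. the centralizer of $Y$), so that $\mathfrak g=\mathbb R X\oplus\mathfrak g_0$, $Y$ is central in $\mathfrak g_0$, and $\mathbb H\cap G_0$ is central in $G_0$. If $\pi|_{G_0}$ happens to be irreducible (for instance when $G_0$ can be taken to be a hyperplane ideal missing the radical $\mathfrak g(\ell)$ of the form $B_\ell(U,V)=\ell([U,V])$ for a functional $\ell$ attached to $\pi$), then the induction hypothesis gives $\eta_0\in\mathcal H_\pi=\mathcal H_{\pi|_{G_0}}$ with phase retrieval and full support for $\pi|_{G_0}$; since the $G$-orbit of $\eta_0$ contains its $G_0$-orbit, $\eta_0$ does phase retrieval for $\pi$, and it has full support for $\pi$ by a Fubini argument along $G\to G/G_0\cong\mathbb R$, noting that the restriction of $V_{\eta_0}^\pi f$ to a coset $\exp(tX)G_0$ equals $V_{\eta_0}^{\pi|_{G_0}}(\pi(\exp(tX))^{-1}f)$, whose zero set is $G_0$-null because $\pi(\exp(tX))^{-1}f\ne 0$.

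Otherwise — the case in which $G$ itself is Heisenberg is the prototype — $\pi|_{G_0}$ is reducible and $\pi\cong\operatorname{ind}_{G_0}^G\sigma$ for some $\sigma\in\widehat{G_0}$, which I would realize on $L^2(\mathbb R,\mathcal H_\sigma)$ via the cross-section $t\mapsto\exp(tX)$; write $F$ for the image of $f\in\mathcal H_\pi$ under this identification. The representation-theoretic fact I would harvest from the Kirillov structure — the Lie-group analog of the construction lemma used in the $p$-group part — is that the conjugates $\sigma_t:=\sigma\circ\operatorname{Ad}(\exp(-tX))|_{G_0}$ differ from $\sigma$ only by characters $\omega_t$ of $G_0$ (because the direction along which they move, namely $\exp(\mathbb R Y)$, is central in $G_0$), so $\sigma_t=\omega_t\cdot\sigma$ on one and the same Hilbert space. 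With $\eta_\sigma\in\mathcal H_\sigma$ a phase-retrieval and full-support vector for $\sigma$ from the induction hypothesis, and $\gamma$ a Gaussian, I would take $\eta$ to correspond to $t\mapsto\gamma(t)\eta_\sigma$, whence a direct computation gives, for $h\in G_0$,
\[
V_\eta^\pi f\big(\exp(aX)h\big)=\Big\langle\, G_{a,h},\ \sigma(h)\eta_\sigma\,\Big\rangle=V_{\eta_\sigma}^\sigma\big(G_{a,h}\big)(h),\qquad G_{a,h}:=\int_{\mathbb R}\gamma(r)\,\overline{\omega_r(h)}\,F(r+a)\,dr\in\mathcal H_\sigma.
\]
Thus $|V_\eta^\pi f|$ is governed by matrix coefficients of $\sigma$ in the $G_0$-variable and by a Gaussian-weighted integral transform in the transverse variable $a$ and the character parameter. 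Phase retrieval should then follow by combining the phase-retrieval property of $\sigma$ (induction hypothesis) with that of the Gaussian for the Schrödinger representation of $\mathbb H$ (Example \ref{ex:Heisenberg_Lie}, Remark \ref{rem:fs_Heisenberg}), organized via Lemma \ref{lem:pr_HS} after identifying $\mathcal H_\pi$ with a space of Hilbert–Schmidt operators; and the full support property should follow by Fubini over the measure space $G\approx\mathbb R\times G_0$, feeding in the full support of $\eta_\sigma$ on $G_0$ together with the a.e. non-vanishing supplied by the analyticity of the Gaussian factor.

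The step I expect to be the main obstacle is this last case: the induced representation does not decompose as an honest Hilbert-space tensor product of an $\mathbb H$-representation with a representation of a lower-dimensional group (the $\sigma_t$ genuinely vary, and the action of $\exp(\mathbb R X)$ couples the transverse and fiber directions through a ``chirp''), so Lemma \ref{lem:pr_HS} does not apply off the shelf. The work is to show that the character-twist description $\sigma_t=\omega_t\cdot\sigma$ is enough structure to run, in this twisted setting, the rows-and-columns argument underlying Lemma \ref{lem:pr_HS}, and — the more delicate half — to propagate the full support property, leveraging full support of $\eta_\sigma$ on the $(n-1)$-dimensional $G_0$ together with the Gaussian's analyticity to obtain full support on the $n$-dimensional $G$.
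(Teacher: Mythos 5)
Your overall strategy coincides with the paper's: induct on dimension, reduce to a (projectively) faithful representation with one-dimensional center, apply Kirillov's lemma to obtain $G_0$ as the centralizer of $Y$, write $\pi \simeq \mathrm{ind}_{G_0}^G \sigma$, and take $\eta(t) = \gamma(t)\eta_\sigma$ with $\gamma$ a Gaussian. The gap lies in the step you yourself flag as the main obstacle, and it is worse than a missing computation: the structural claim it rests on, namely that $\sigma_t = \sigma\circ\mathrm{Ad}(\exp(-tX))|_{G_0}$ differs from $\sigma$ only by a character $\omega_t$ of $G_0$, is false in general. It is true that $\mathrm{Ad}(\exp(-tX))$ moves $Y$ only in the central direction ($Y\mapsto Y - tZ$), which does produce a character twist; but for a general $w$ in a complement $W$ of $\mathbb{R}Y\oplus\mathbb{R}Z$ inside $\mathfrak{g}_0$, the element $\mathrm{Ad}(\exp(-tX))w$ can acquire a nontrivial component in $W$ on which $\sigma$ is not scalar. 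Concretely, take $\mathfrak{g}=\mathrm{span}(X,P,Q,Y,Z)$ with $[X,Y]=Z$, $[P,Q]=Z$, $[X,P]=Q$ and all other brackets zero. Then $Z(\mathfrak g)=\mathbb{R}Z$, the centralizer of $Y$ is $\mathfrak g_0=\mathrm{span}(P,Q,Y,Z)$, a Heisenberg algebra plus a central line, and any $\sigma$ with nontrivial central character on $\exp(\mathbb{R}Z)$ is infinite dimensional with $\sigma(\exp(sQ))$ a nonscalar (modulation) operator. Since $\mathrm{Ad}(\exp(-tX))P = P - tQ$, the operator $\sigma_t(\exp(sP))$ differs from $\sigma(\exp(sP))$ by the nonscalar factor $\sigma(\exp(-stQ))$ up to a central phase, not by a character value. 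Consequently your formula for $V_\eta^\pi f(\exp(aX)h)$, in which the vector $G_{a,h}$ depends on $h$ only through the scalar $\overline{\omega_r(h)}$, is not available, and the proposed ``twisted rows-and-columns'' argument has no correct starting point.

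The paper circumvents exactly this issue by choosing coordinates $\exp(zZ)\exp(yY)\exp(w)\exp(xX)$ that separate the $Y$-direction (where the twist genuinely is a character, producing the modulation $e^{-2\pi i t y}$) from the $W$-direction. This yields the factorization $V_\eta f(y,w,x) = V_{\eta_1}^{\varrho}(F(w,\cdot))(y,x)$, where $F(w,t) = V_{\eta_2}^{\tau}(f(t))\bigl(\exp(-tX)\exp(w)\exp(tX)\bigr)$ is, for each fixed $t$, a genuine $\tau$-matrix coefficient evaluated along conjugated representatives; since conjugation is a bijection of $G_0$ preserving $A$, no character-twist hypothesis on the $W$-direction is needed. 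Phase retrieval then follows in three moves: phase retrieval of the Gaussian in $(y,x)$ for fixed $w$ gives $H(w,\cdot)=u(w)F(w,\cdot)$; phase retrieval of $\eta_2$ in $w$ for fixed $t$ gives $h(t)=v(t)f(t)$; and --- the step absent from your proposal --- the phases $v(t)$ are glued into a single constant by using the full support property of $\eta_2$ to produce, for any $t_1,t_2$ with $f(t_i)\neq 0$, a common $w$ with $F(w,t_1)F(w,t_2)\neq 0$, whence $v(t_1)=u(w)=v(t_2)$. This gluing is where the full-support half of the induction hypothesis is consumed on the phase-retrieval side; without it one only recovers $f$ up to a $t$-dependent phase. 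Your first case ($\pi|_{G_0}$ irreducible) is argued correctly but does not arise for the paper's choice of $G_0$, and your opening reductions are fine.
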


\begin{proof}
 We proceed by induction over ${\rm dim}(G).$ Noting first that the abelian case is trivial, and secondly, that the case of the Heisenberg group is settled, we may assume that $G$ is a nonabelian group of dimension $n>3$. 
 
 Furthermore, we may assume that $K = {\rm Ker}(\pi)$ is a zero-dimensional subgroup of $G.$ Otherwise, the Lie algebra $\mathfrak{k}$ of $K$ has positive dimension, and the connected component of the identity $K_0 = \exp(\mathfrak{k})$ in $K$ is a normal subgroup of positive dimension.  Now the inductive hypothesis yields a vector with phase retrieval and full support property for the associated representation of the quotient group $G/K$, and thus for $\pi$ (see Remarks \ref{rem:pr_factor_group}, \ref{rem:fs_factor_group}.) 
 
 Hence from now on the kernel of $\pi$ is assumed to be zero-dimensional. This implies that the center $Z(G)$ of $G$ is necessarily one-dimensional.  Pick $Z \in \mathfrak{g}$ such that $\exp(\mathbb{R}Z) = Z(G)$. By Kirillov's lemma \cite[Lemma 1.1.12]{MR1070979}, there exist $X,Y \in \mathfrak{g}$ with the following properties: $\mathfrak{g} = \mathfrak{g}_0 \oplus \mathbb{R} X$, $[X,Y] = Z$, $\mathfrak{g}_0$ is an ideal in $\mathfrak{g}$, and the centralizer of $Y$. In particular, the associated subgroup $G_0 = \exp(\mathfrak{g}_0)$ is a normal subgroup, with $G = G_0 \exp(\mathbb{R}X)$. Furthermore, $A = \exp(\mathbb{R}Y) \exp(\mathbb{R} Z)$ is central in $G_0$, and normal in $G$. 
 
 For better readability, the remainder of the proof is structured into several steps.
 
 \noindent
 \textbf{Step 1: There exists an irreducible representation $\tau$ of $G_0$ such that $\pi \simeq {\rm ind}_{G_0}^G (\tau)$.}
 
 To see this, consider the dual action of $G$ on the dual group $\widehat{A}$, defined for arbitrary 
$g\in G$ and $a\in A,$ by $\left[  g\star\chi\right]  \left(  a\right)
=\chi\left(  g^{-1}ag\right).$ Let $G_{\chi}=\left\{  g\in G:g\star\chi
=\chi\right\}  $ be the stabilizer of $\chi$ and let
\[
\widetilde{G}_{\chi}=\left\{  \tau\in\widehat{G_{\chi}}:\tau
|_{A}\text{ is a multiple of }\chi\right\}  .
\]
For $\tau\in\widetilde{G}_{\chi},$ $\pi_{\tau}=\mathrm{ind}_{G_{\chi}%
}^{G}\left(  \tau\right)  $ is an irreducible representation of $G.$ In
fact, according to the theory of Mackey \cite[Theorem 4.22]{MR3012851}, every irreducible representation of
$G$ is of this form. Hence, since $\pi$ is assumed to be irreducible, there exists $\tau,$ such that $\pi\simeq\mathrm{ind}_{G_{\chi}}^{G} \left(  \tau\right)$ for suitable $\chi \in \widehat{A}$ and $\tau \in \widetilde{G}_{\chi}$. $\tau$ is irreducible, since $\pi$ is. To complete the proof of Step 1, it suffices to show that $G_\chi = G_0$. Since $A$ is central in $G_0$, the conjugation action of $G_0$ on $A$ is trivial. The same then holds for the dual action of $G_0$ on $\widehat{A}$, and this establishes $G_0 \subset G_{\chi}.$ Moreover, since $\pi$ is irreducible, $\pi|_{\exp(\mathbb{R} Z)}$ is the multiple of a character, and by assumptions on the kernel of $\pi$, this character is necessarily, nontrivial. Furthermore, Lemma \ref{lem:ind_char} implies that this character coincides with $\chi$ on $Z(G)$. Hence, by suitably normalizing $Z$ and $Y$, when necessary, we may assume without loss of generality that $\pi(\exp(z Z)) = e^{2 \pi i z}~.$ The relation $[X,Y] = Z$ then entails
\[
 \exp(-xX) \exp(yY) \exp(xX) = \exp(yY) \exp(-xyZ)~,
\]
and this leads to the following string of equalities:
\[
 (\exp(x X) \star \chi) (\exp(yY)) =  \chi( \exp(yY) \exp(-xyZ)) =  e^{-2 \pi i xy} \chi(\exp(yY))~.
\]
These observations imply that $\exp(xX) \star \chi = \chi$ can only hold for $x=0$. Since every $g \in G$ can be factored uniquely as $g =  \exp(x X) g_0$, with $g_0 \in G_0 \subset G_\chi$ and $x \in \mathbb{R}$, we derive finally that $G_0 = G_\chi$.

 \noindent
\textbf{Step 2: Choosing an explicit realization of $\pi$.}
We will now introduce coordinates on $G$, and express $\pi$ more explicitly in terms of these coordinates, and a suitable choice of realization. 
Thus far, we have shown that $\pi \simeq {\rm ind}_{G_0}^G (\tau)$, where $G_0 \lhd G$, and $\tau$ is an irreducible representation, restricting to a multiple of a character $\chi$ on $A = \exp(\mathbb{R}Y) \exp(\mathbb{R} Z)$. Furthermore, $\chi(\exp(zZ)) = e^{2 \pi i z}$. Let $\mathfrak{g}_0 \subset \mathfrak{g}$ denote the Lie algebra of $G_0$, and let $W \subset \mathfrak{g}_0$ denote a vector space complement to $\mathbb{R}X \oplus \mathbb{R} Y \subset \mathfrak{g}_0.$ Then every $g \in G$ has the unique factorization  
\[
 g = \exp(zZ)  \exp(yY)  \exp(w) \exp(xX)~,
\] with $x,y,z \in \mathbb{R}$ and $w \in W$. Consequently, every element in the closed and normal subgroup $G_0$ also admits a unique factorization of the type $g_0 = \exp(zZ) \exp(yY) \exp(w).$ 

As a consequence of the factorizations, we obtain that $G$ is the inner semidirect product $G = G_0 \rtimes \exp(\mathbb{R} X)$, which allows to use a specific realization of induced representations (see e.g. page 79 of \cite{MR3012851}, \textit{ Realization III for Semidirect Products}). The representation space can be taken as $\mathcal{H}_\pi = L^2(\mathbb{R},\mathcal{H}_\tau)$, the space of weakly Borel-measurable $\mathcal{H}_\tau$-valued functions on $\mathbb{R}$ with square-integrable norms, and the cited source provides the following explicit formulae
\begin{equation} \label{eqn:realize_pi} \left[ \pi \left( g\right) f\right] \left( t\right) =\begin{cases}f\left( t-x\right) \text{ if } g=\exp \left( x X\right) \\
e^{-2\pi ity}f\left( t\right) \text{ if }  g=\exp \left( yY\right) \\
\tau \left( \exp \left( e^{ad\left( -tX\right) }w\right) \right) f\left( t\right) \text{ if } g=\exp \left( w\right)~, w \in W \end{cases}.\end{equation}
It is worth noting that $\pi$ acts partially like a Schr\"odinger representation via $\exp(x X)$ and $\exp(y Y).$ 

 \noindent
\textbf{Step 3: Fixing the vector $\eta$.}
By Remark \ref{rem:fs_Heisenberg}, there exists $\eta_1 \in L^2(\mathbb{R})$ with full support and phase retrieval properties for the Schr\"odinger representation. 
Furthermore, the inductive hypothesis provides $\eta_2 \in \mathcal{H}_\tau$ with full support and phase retrieval properties for $\tau$. We then define $\eta \in L^2(\mathbb{R},\mathcal{H}_{\tau})$ by $\eta(t) = \eta_1(t) \eta_2$, and claim that it has the full support and phase retrieval properties for $\pi$.

\noindent
\textbf{Step 4: Explicit formulae for matrix coefficients.}
Fix any nonzero $f \in L^2(\mathbb{R},\mathcal{H}_\tau)$. By abuse of notation, we will systematically use
\[
V_\eta f(y,w,x) ~,~(y,w,x) \in \mathbb{R} \times W \times \mathbb{R}
\]
instead of 
\[
V_\eta(\exp(yY) \exp(w) \exp(xX))
\]
We will also omit the central variable $zZ$ from our discussion, in view of Remarks \ref{rem:pr_factor_group} and \ref{rem:fs_factor_group}. 

We then get from (\ref{eqn:realize_pi}) that
\begin{eqnarray*}
V_\eta f (y,w,x) & = & \langle f, \pi(\exp (yY) \exp (w) \exp(x X)) \eta \rangle \\ & = & \int _{\mathbb{R} }\langle f\left( t\right) ,\tau\left( \exp \left( e^{-ad\left( t X\right) }w\right) \right) \eta \left( t-x\right) \rangle e^{2 \pi i y t}dt \\
& = & \int _{\mathbb{R} }\langle f\left( t\right) ,\tau\left( \exp \left( e^{-ad\left( t X\right) }w\right) \right) \eta_2 \rangle \overline{e^{2 \pi i y t} \eta_1(t-x)} dt \\
& = & \int_{\mathbb{R} }\langle f\left( t\right) ,\tau\left( \exp(-t X) \exp(w) \exp(t X) \right) \eta_2 \rangle \overline{e^{2 \pi i y t} \eta_1(t-x)} dt ~.
\end{eqnarray*}
Now, using $\varrho$ to denote the Schr\"odinger representation, as well as
\begin{equation} \label{eqn:def_F}
F: W \times \mathbb{R} \to \mathbb{C}~,~ F(w,t) = \langle f\left( t\right) ,\tau\left( \exp(-t X) \exp(w) \exp(t X) \right) \eta_2 \rangle
\end{equation}
we may rewrite this formula as
\begin{equation} \label{eqn:V_eta_rho}
    (V_\eta f)(y,w,x) = \left(V_{\eta_1}^\varrho F(w,\cdot) \right) (y,x)~.
\end{equation}
In addition, we make the observation that 
\begin{equation} \label{eqn:F_mc}
F(w,t) = \left( V_{\eta_2}^\tau f(t) \right) \left( \exp(-t X) \exp(w) \exp(t X)  \right)~.
\end{equation}

 \noindent
\textbf{Step 5: $\eta$ does phase retrieval.}
Now assume that $f,h \in L^2(\mathbb{R},\mathcal{H}_\tau)$ are such that $|V_\eta f| = |V_\eta h|$ holds. With $F$ from (\ref{eqn:def_F}) and $H$ defined analogously as 
\[
H: W \times \mathbb{R} \to \mathbb{C}~,~ H(w,t) = \langle h\left( t\right) ,\tau\left( \exp(-t X) \exp(w) \exp(t X)  \right) \eta_2 \rangle
\]
equation (\ref{eqn:V_eta_rho}) gives for all $w \in W$
\[
 \left| V_{\eta_1}^\varrho (F(w,\cdot)) \right| =  \left| V_{\eta_1}^\varrho (H(w,\cdot)) \right| ~.
\] Since $\eta_1$ does phase retrieval, this implies the existence of a map 
\begin{equation} \label{eqn:defn_ph_u}
u: W \to \mathbb{T}~,~ H(w,\cdot) = u(w)F(w,\cdot)~~\forall w \in W. 
\end{equation} It follows for almost all $t \in \mathbb{R}$ that
\[
|H(\cdot,t)| = |F(\cdot,t)|~.
\] By construction, $\exp(W) \subset G_0$ is a system of representatives modulo the central subgroup $\exp(\mathbb{R}Z) \exp(\mathbb{R}Y)$ of $G_0$, which is contained in the projective kernel of $\tau$. Hence equation (\ref{eqn:F_mc}) (and its analog for $H$) allows to rewrite the previous equation as follows, for almost all $t \in \mathbb{R}$:  
\[
\forall g_0 \in G_0 ~:~ \left| V_{\eta_2}^\tau (f(t)) (\exp(-tX) g_0 \exp(tX)) \right| = \left| V_{\eta_2}^\tau (h(t)) (\exp(-tX) g_0 \exp(tX)) \right|~.
\] For fixed $t \in \mathbb{R}$, conjugation by $\exp(tX)$ is a bijection of $G_0$ onto itself, hence we may simplify this equation to 
\[
\left| V_{\eta_2}^\tau (f(t)) \right| \equiv \left| V_{\eta_2}^\tau (h(t)) \right|~.
\] Now we can appeal to the phase retrieval property of $\eta_2$ to conclude the existence of a map 
\begin{equation} \label{eqn:role_v_cont}
v : \mathbb{R} \to \mathbb{T}~,~h(t) = v(t) f(t)~ 
\end{equation} with the equality holding for almost every $t \in \mathbb{R}$. Note that this also entails, for almost every $t$, that 
\[
H(\cdot,t) = v(t) G(\cdot,t)~.
\]
Defining the Borel set 
\begin{equation} \label{eqn:def_C}
    C_f = \{ t \in \mathbb{R}~:~ f(t) \not= 0 \} \subset \mathbb{R}~,
\end{equation} (\ref{eqn:role_v_cont}) shows that the proof of phase retrieval boils down to showing that $v$ is constant on $C_f$. Note that by removing a set of measure zero from $C_f$ we can assume that (\ref{eqn:role_v_cont}) holds for all $t \in C_f$.

At this point the full support property of $\eta_2$ enters the picture. Picking any $t \in C_f$, equation (\ref{eqn:def_F}) and the full support property of $\eta_2$ implies that the set
\[
W_f(t) = \{ w \in W : F(w,t) \not= 0 \}
\] has full measure. Note that this step also used the fact that the automorphism of $G_0$ induced by conjugation with $\exp(tX)$ preserves the Haar measure of $G_0$, and Remark \ref{rem:fs_factor_group}. 

Hence, picking $t_1,t_2 \in C_f$ and any $w$ in $W_f(t_1) \cap W_f(t_2)$ (which is of full measure, hence nonempty), we obtain from (\ref{eqn:defn_ph_u})
\begin{eqnarray*}
v(t_i) F(w,t_i) = H(w,t_i) = u(w) F(w,t_i)~.\end{eqnarray*}
 By choice of $w$, we may cancel $F(w,t_i)$ on both sides, to obtain 
 $v(t_1) = u(w) = v(t_2)$. This concludes the proof of phase retrieval. 

\noindent
\textbf{Step 6: $\eta$ has the full support property.}
Let $0 \not= f \in L^2(\mathbb{R},\mathcal{H}_\tau)$ be given, and let $C_f$ be as in (\ref{eqn:def_C}). We will prove that the set 
\[
\left\{ (y,w,x) \in \mathbb{R} \times W \times \mathbb{R} : V_\eta f(y,w,x) = 0 \right\}
\] has measure zero. Then an appeal to Remark \ref{rem:fs_factor_group} finishes the proof.

Given any $w \in W$, equation (\ref{eqn:V_eta_rho}) together with the full support property of $\eta_1$ shows that 
\[
\left\{ (y,x) \in \mathbb{R}  \times \mathbb{R} : V_\eta f(y,w,x) = 0 \right\} 
\] has measure zero iff 
\[
\| F(w,\cdot)\|_{L^2(\mathbb{R})} \not= 0~,
\] with the function $F$ from (\ref{eqn:def_F}).

Thus by Fubini's theorem, the full support property for $\eta$ is shown once we establish that  
\[
\left\{ w \in W: \int_{\mathbb{R}} |F(w,t)|^2 dt = 0 \right\}
\] has measure zero. For this purpose, introduce the set
\[
B_f = \{ (w,t) \in W \times C_f : F(w,t) = 0 \}~,
\] with $C_f$ defined in Step 5. We also introduce the auxiliary function 
\[
\tilde{F}(w,t)= \left( V_{\eta_2}^\tau f(t) \right) \left( \exp (w) \right) 
\]
and associated set 
\[
\tilde{B}_f = \{ (w,t) \in W \times C_f :  \tilde{F}(w,t) = 0 \}~.
\]

Given $t \in \mathbb{R}$,
consider the map $\alpha(t): W \to W$ defined as 
\[
\alpha(t) (w) = P_W \left( e^{-ad(tX)} w \right)
\] where $P_W$ is the projection onto $W$ inside $\mathfrak{g}_0$, along the complement $\mathbb{R} Z \oplus \mathbb{R} Y$. Then by definition of $\alpha(t)$, we have 
\[
\exp(\alpha(t) w)^{-1} \exp(-t X) \exp(w) \exp(t X)  \in Z(G_0)~, 
\] and since $\tau$ restricted to $Z(G_0)$ is a character, this gives rise to the equation 
\[
|F(w,t)| = |\tilde{F}(\alpha(t)w,t)|~.
\]
We also note that $\alpha(t)$ is a diffeomorphism, with inverse map given by $\alpha(-t)$. 

These observations allow to write 
\begin{equation} \label{eqn:Bf_vs_tilde}
B_f = \{ (w,t) \in W \times C_f : (\alpha(t)w, t) \in \tilde{B}_f \}~. 
\end{equation}

The whole point of introducing $\tilde{B}_f$ was that the assumption that $\eta_2$ has the full support property, and the definition of $C_f$, imply that $\tilde{B}_f$ is a set of measure zero, by Fubini's theorem. It then follows by (\ref{eqn:Bf_vs_tilde}), a change of variable involving the $\alpha(t)$, and yet another application of Fubini's theorem, that $B_f$ has measure zero. A final application of Fubini's theorem then yields a set $W' \subset W$ of full measure such that $F(w,\cdot)$ vanishes almost nowhere on $C_f$, for all $w \in W'$. Since $C_f$ is of positive measure, this finally implies
\[
\int_{\mathbb{R}} |F(w,t)|^2 dt>0
\] for all $w \in W'$, and the proof is finished. 
\end{proof}

\section{Finite nilpotent groups}

This section is devoted to establishing analogs of Theorem \ref{thm:main_1} for finite nilpotent groups. While we will not be able to cover all finite nilpotent groups, we will exhibit large classes of finite nilpotent groups, for which an analog can be formulated and proved, using an inductive method that is quite similar to that employed for Lie groups. 

The following theorem contains the largest previously known class of finite groups for which phase retrieval is established:
\begin{theorem} \label{thm:class_2}
Let $G$ denote a finite nilpotent group of nilpotency class 2, which means that $G$ is nonabelian, and $G/Z(G)$ is abelian. Then every irreducible representation of $G$ does phase retrieval. 
\end{theorem}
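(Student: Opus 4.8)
The plan is to mimic the structure of the proof of Theorem~\ref{thm:main_1}, replacing Kirillov's lemma and the Schrödinger representation of the real Heisenberg group by their finite counterparts. Let $G$ be a finite nilpotent group of class $2$ and $\pi$ an irreducible representation of $G$. First I would use Remark~\ref{rem:pr_factor_group} to reduce to the case in which $\pi$ has trivial kernel: factoring out $\mathrm{Ker}(\pi)$ changes neither irreducibility nor the phase retrieval property. For a class-$2$ group with faithful irreducible representation, $G/Z(G)$ is abelian, and in fact the commutator pairing $G/Z(G)\times G/Z(G)\to Z(G)$, $(\bar x,\bar y)\mapsto [x,y]$, together with Schur's lemma (so that $\pi$ is a character $\chi$ on $Z(G)$), makes this a nondegenerate alternating bilinear form into $\mathbb{T}$ after composing with $\chi$ — nondegeneracy because the radical of the form would sit in the projective kernel, hence (for faithful $\pi$) in $Z(G)$. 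This identifies the essential content of $\pi$ as a ``twisted group algebra'' / Heisenberg-type situation over the finite abelian group $G/Z(G)$.

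Next I would carry out the Mackey-style induction step. Pick $\bar y\in G/Z(G)$ nontrivial and let $N$ be a maximal subgroup of $G$ containing $Z(G)$ and a preimage of $\bar y$ but not a preimage of some $\bar x$ pairing nontrivially with $\bar y$; concretely, choose a subgroup $N\lhd G$ of index $p$ (some prime dividing $|G/Z(G)|$) such that $N$ contains the preimage $A=\langle y\rangle Z(G)$ of a chosen $\bar y$ whose pairing with a transversal generator is nontrivial. Then $A$ is central in $N$ and normal in $G$, and — exactly as in Step~1 of the proof of Theorem~\ref{thm:main_1}, using Lemma~\ref{lem:ind_char} and the finite analogue of Mackey's theorem — one shows $\pi\simeq\mathrm{ind}_N^G(\tau)$ for an irreducible $\tau$ of $N$ restricting to a multiple of a nontrivial character $\chi$ of $A$ with $\chi$ nontrivial on $Z(G)$. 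Writing $G=N\rtimes\langle x\rangle$ with $x$ of order $p$ in the quotient and $[x,y]$ generating a nontrivial value, one gets an explicit realization of $\pi$ on $\ell^2(\mathbb{Z}_p,\mathcal{H}_\tau)$ in which $x$ acts by a shift, $y$ acts by a modulation character in the $\mathbb{Z}_p$-variable, and the remaining generators act through $\tau$ composed with conjugation by $x^{-t}$ — the precise finite analogue of formula~(\ref{eqn:realize_pi}), with the finite Schrödinger representation of $\mathbb{H}_p$ (Example~\ref{ex:finite_Heisenberg}) playing the role of $\varrho$.

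With that realization in hand I would choose $\eta(t)=\eta_1(t)\,\eta_2$, where $\eta_1\in\mathbb{C}^p$ has nonvanishing ambiguity function for the finite Schrödinger representation (Example~\ref{ex:finite_Heisenberg}, Proposition~2.1 of \cite{MR3500231}) and $\eta_2\in\mathcal{H}_\tau$ does phase retrieval for $\tau$, obtained by induction on $|G|$ — the base case being $G$ abelian (trivial) or $G\cong\mathbb{H}_p$ (Example~\ref{ex:finite_Heisenberg}); note $G/Z(G)$ strictly shrinks, and $N/Z(N)$ has strictly smaller size, so the induction is well-founded. The matrix-coefficient computation of Step~4 goes through verbatim with integrals replaced by finite sums: $V_\eta f(y,w,x)=\bigl(V_{\eta_1}^\varrho F(w,\cdot)\bigr)(y,x)$ with $F(w,t)=\bigl(V_{\eta_2}^\tau f(t)\bigr)\bigl(x^{-t}wx^{t}\bigr)$. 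Phase retrieval then follows exactly as in Step~5: $|V_\eta f|=|V_\eta h|$ plus phase retrieval of $\eta_1$ gives $H(w,\cdot)=u(w)F(w,\cdot)$; phase retrieval of $\eta_2$ gives $h(t)=v(t)f(t)$ on the support set $C_f=\{t:f(t)\neq0\}$; and one shows $v$ is constant on $C_f$ by finding, for any $t_1,t_2\in C_f$, a common $w$ with $F(w,t_1)F(w,t_2)\neq0$ and cancelling.

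The key difference from the Lie case — and the main obstacle — is precisely that last cancellation step: in the Lie proof one used the \emph{full support property} of $\eta_2$ (the zero set of $w\mapsto F(w,t)$ has measure zero, so two such sets intersect). There is no measure-zero argument available in the finite setting, so one cannot in general guarantee that $W_f(t_1)\cap W_f(t_2)\neq\varnothing$. This is the crux: for class-$2$ groups one must instead exploit extra structure. In fact for class $2$ the conjugation map $g_0\mapsto x^{-t}g_0x^{t}$ moves $g_0$ only by a \emph{central} element of $N$ (since $[x,N]\subset Z(G)\subset A$ lies in the projective kernel of $\tau$), so $F(w,t)$ differs from $\bigl(V_{\eta_2}^\tau f(t)\bigr)(\exp w)$ only by a unimodular scalar depending on $(w,t)$; hence $W_f(t)=\{w:(V_{\eta_2}^\tau f(t))(w)\neq0\}$ \emph{does not depend on $t$}, only on $f(t)$ through its direction. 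But $h(t)=v(t)f(t)$ already forces $f(t_1),f(t_2)$ to have the same nonzero directions up to phase, making $W_f(t_1)=W_f(t_2)$ — which is all of $N/Z$ minus the zero set of a single fixed matrix coefficient. Using Lemma~\ref{lem:basics_pr}(b) applied to the pair $(V_{\eta_2}^\tau f(t_1), V_{\eta_2}^\tau f(t_2))$ within the phase-retrieving system for $\tau$, such a common nonzero $w$ exists, and the cancellation goes through. Spelling out this ``conjugation acts centrally, hence supports are $t$-independent'' observation carefully — and checking it genuinely uses class $2$ and nothing weaker — is where the real work lies.
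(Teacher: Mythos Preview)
Your approach is genuinely different from the paper's. The paper does not prove Theorem~\ref{thm:class_2} directly; it simply cites Theorem~1.7 of \cite{MR3901918}, which states that every irreducible \emph{projective} representation of a finite abelian group does phase retrieval, and invokes the standard correspondence: an irreducible unitary representation $\pi$ of a class-$2$ group $G$ restricts to a character on $Z(G)$, hence descends to a projective representation of the abelian group $G/Z(G)$, and by Remark~\ref{rem:pr_factor_group} the phase retrieval properties coincide. Your inductive strategy instead anticipates what the paper later develops as Lemma~\ref{lem:pr_inductive}(b): the class-$2$ hypothesis forces $[x,G_0]\subset [G,G]\subset Z(G)$, so conjugation by $x^{-j}$ acts on $G_0$ only through the projective kernel of $\tau$, whence $|F(w,j)|=|(V_{\eta_2}^\tau f(j))(w)|$ is independent of $j$ up to phase, and Lemma~\ref{lem:basics_pr}(b) supplies the common $w$. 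That is exactly the mechanism behind part~(b) of Lemma~\ref{lem:pr_inductive}, and it is the right structural observation.

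There is, however, a genuine gap. You write ``$G=N\rtimes\langle x\rangle$'' and use the semidirect-product realization of the induced representation, but the extension $1\to G_0\to G\to G/G_0\to 1$ need not split for class-$2$ groups. The quaternion group $Q_8$ is already a counterexample: it is a $2$-group of class~$2$ with a faithful irreducible representation, every maximal subgroup is cyclic of order~$4$, and the only element of order~$2$ is central, so no choice of $x$ of order~$2$ exists outside $G_0$. In the non-split case the cross-section $\sigma(j)=x^j$ introduces a cocycle involving $\tau(x^{p^r})$, and your Schr\"odinger-type operator $V_{\eta_1}^\varrho$ is no longer the honest finite Schr\"odinger representation; you would need to argue separately that this twisted version still does phase retrieval. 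This is precisely why the paper's later inductive results (Lemma~\ref{lem:pr_inductive}, Corollary~\ref{cor:p_groups_general}) explicitly assume split K-triples, and why Theorem~\ref{thm:class_2} is obtained by a different route. Your key observation survives the failure of splitting---conjugation is still central---and in fact it is essentially the reason the projective-representation reduction works; but as written, your argument does not cover groups like~$Q_8$.

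A minor point: the sentence ``$h(t)=v(t)f(t)$ already forces $f(t_1),f(t_2)$ to have the same nonzero directions up to phase, making $W_f(t_1)=W_f(t_2)$'' is incorrect---that relation links $h$ to $f$ pointwise, not $f(t_1)$ to $f(t_2)$. Fortunately you do not need it: your subsequent appeal to Lemma~\ref{lem:basics_pr}(b) with the two nonzero vectors $f(t_1),f(t_2)$ is the correct argument and stands on its own.
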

This theorem is a consequence of Theorem 1.7 from \cite{MR3901918}, which states that every projective irreducible representation of a finite abelian group does phase retrieval, combined with the natural correspondence between projective representations of finite abelian groups and unitary representations of class 2 nilpotent groups. 
Our results will significantly extend this class of examples.

Recall that a $p$-group is a group $G$ with cardinality $p^r$, for some positive exponent $r>0$ and prime number $p$. If $p^k$ is the highest power of some prime number $p$ dividing the order of $G,$ then a subgroup of $G$ of order $p^k$ is called a $p$-Sylow subgroup of $G.$ The \textit{order} of a group $G$ is the smallest integer $n$ satisfying $x^n = e$ for all $x \in G$. Clearly, the order of a $p$-group is again a power of $p$.

The treatment of general finite nilpotent groups is easily reduced to that of $p$-groups, by the following well-known observation (see e.g. Theorems 5.2.4 and 8.4.2 of \cite{MR648604}).
\begin{theorem} \label{p_group} Every finite nilpotent group $G$ is the direct product of its $p$-Sylow subgroups. As a consequence, every irreducible representation of $G$ is the outer tensor product of irreducible representations of its $p$-Sylow subgroups.  \end{theorem}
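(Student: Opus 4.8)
The plan is to prove the two assertions separately: first the group-theoretic decomposition $G \cong \prod_p P_p$ into $p$-Sylow subgroups, then its representation-theoretic consequence.

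For the structural statement, the heart of the matter is that in a finite nilpotent group every Sylow subgroup is normal. I would establish this via the \emph{normalizer condition}: in a nilpotent group, every proper subgroup $H$ is properly contained in its normalizer $N_G(H)$. This follows by induction on the nilpotency class, using that $Z(G)$ is nontrivial and central — if $Z(G) \not\subseteq H$ then already $H \subsetneq H \cdot Z(G) \subseteq N_G(H)$, and otherwise one passes to $G/Z(G)$, which has strictly smaller class. Next, for a $p$-Sylow subgroup $P$ with $N = N_G(P)$, a Frattini-type argument shows that $N$ is self-normalizing: any $g$ normalizing $N$ conjugates $P$ to another Sylow $p$-subgroup of $N$, hence to $P$ itself by some element of $N$, forcing $g \in N$. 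Combining the two facts, $N$ cannot be a proper subgroup, so $P \lhd G$.

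Once all Sylow subgroups $P_1,\dots,P_k$ (for the distinct primes $p_1,\dots,p_k$ dividing $|G|$) are normal, I would check: (i) for $i \ne j$ one has $P_i \cap P_j = \{e\}$ by coprimality of orders, hence every commutator $x^{-1}y^{-1}xy$ with $x \in P_i$, $y \in P_j$ lies in $P_i \cap P_j = \{e\}$, so elements of distinct $P_i$'s commute; (ii) by an easy induction using $|AB| = |A|\,|B|/|A\cap B|$, the internal product $P_1 P_2 \cdots P_k$ has order $\prod_i |P_i| = |G|$, so $G = P_1 \cdots P_k$. Properties (i) and (ii), together with the pairwise-trivial intersections, say precisely that $G$ is the internal direct product $P_1 \times \cdots \times P_k$. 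For the representation-theoretic claim I would then induct on $k$, reducing to the case $G = A \times B$, and invoke the classical fact that the irreducible representations of a direct product of two finite groups are exactly the outer tensor products $\sigma \boxtimes \tau$ with $\sigma \in \widehat{A}$, $\tau \in \widehat{B}$ (pairwise inequivalent and exhausting $\widehat{A \times B}$); this is proved either through the isomorphism $\mathbb{C}[A\times B]\cong \mathbb{C}[A]\otimes\mathbb{C}[B]$ and Wedderburn's theorem, or by character theory via $\chi_{\sigma \boxtimes \tau} = \chi_\sigma \otimes \chi_\tau$. Iterating over the $k$ Sylow factors yields the statement.

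I do not expect a genuine obstacle here: every step is classical. The only part carrying real content is the normalizer condition and the resulting normality of the Sylow subgroups; everything afterward is bookkeeping with group orders and a citation-level fact about representations of direct products. Accordingly I would keep the write-up short, or — as the paper currently does — simply defer to the cited references.
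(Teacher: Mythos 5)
Your proposal is correct and is precisely the standard argument (normalizer condition $\Rightarrow$ Sylow subgroups normal $\Rightarrow$ internal direct product, followed by the classical description of irreducibles of a direct product), which is exactly what the references cited in the paper for this theorem contain; the paper itself gives no proof and simply defers to those sources. Nothing further is needed.
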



The following Lemma is a direct consequence of Lemma \ref{lem:pr_HS}, which immediately gives a solution for the phase retrieval problem for tensor product representations in terms of solutions for the factors. Since it immediately generalizes to finitely many tensor factors, this lemma reduces the phase retrieval problem for irreducible representation of general finite nilpotent groups to the $p$-group case, on which we shall subsequently concentrate. 

\begin{lemma}
\label{PR copy(7)} For $k=1,2,$ let $\pi_{k}$ be a unitary representation of
some finite group $G_{k}$ and suppose that each $\pi_{k}$ does phase retrieval
and is realized is some Hilbert space $\mathcal{H}_{\pi_{k}}.$ Next, let
$\pi=\pi_{1}\otimes\pi_{2}$ be the outer tensor product of $\pi_1$ and $\pi_2$. If $\eta_{1}$ does phase retrieval for $\pi_{1}$ and
if $\eta_{2}$ does phase retrieval for $\pi_{2}$ then $\eta_{1}\otimes\eta
_{2}\in\mathcal{H}$ does phase retrieval for $\pi.$
\end{lemma}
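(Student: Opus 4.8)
The plan is to deduce Lemma \ref{PR copy(7)} directly from Lemma \ref{lem:pr_HS} by identifying the outer tensor product representation $\pi = \pi_1 \otimes \pi_2$ with a representation acting on the space $HS(\mathcal{H}_{\pi_2}^*, \mathcal{H}_{\pi_1})$ of Hilbert–Schmidt operators, or more plainly on $\mathcal{H}_{\pi_1} \otimes \mathcal{H}_{\pi_2}$ realized as Hilbert–Schmidt operators. First I would recall that for finite-dimensional $\mathcal{H}_{\pi_1}, \mathcal{H}_{\pi_2}$, the tensor product $\mathcal{H}_{\pi_1} \otimes \mathcal{H}_{\pi_2}$ is canonically isometrically isomorphic to $HS(\mathcal{H}_{\pi_2}, \mathcal{H}_{\pi_1})$ via $x \otimes y \mapsto x \otimes \bar{y}$ (the rank-one operator $z \mapsto \langle z, y\rangle x$ of the excerpt, up to conjugation of the second factor), and that under this identification the simple tensor vector $\eta_1 \otimes \eta_2$ corresponds to the rank-one operator $\eta_1 \otimes \eta_2 \in HS(\mathcal{H}_{\pi_2}, \mathcal{H}_{\pi_1})$ in the notation fixed before Lemma \ref{lem:pr_HS}.

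Next I would observe that the family of vectors $\{\pi(g_1, g_2)(\eta_1 \otimes \eta_2) : (g_1,g_2) \in G_1 \times G_2\}$ coincides, under this identification, with the family $\{(\pi_1(g_1)\eta_1) \otimes (\pi_2(g_2)\eta_2) : (g_1,g_2) \in G_1 \times G_2\}$ of elementary tensors built from the two orbits $(x_i)_{i \in I} = (\pi_1(g_1)\eta_1)_{g_1 \in G_1}$ and $(y_j)_{j \in J} = (\pi_2(g_2)\eta_2)_{g_2 \in G_2}$ — here one must be a little careful about whether the second factor should be $\pi_2$ or its contragredient $\overline{\pi_2}$, but since $\eta_2$ doing phase retrieval for $\pi_2$ is equivalent to $\overline{\eta_2}$ doing phase retrieval for $\overline{\pi_2}$ (complex conjugation is an anti-unitary intertwiner, and the phase retrieval condition is invariant under it), this causes no difficulty; I would simply replace $\eta_2$ by $\overline{\eta_2}$ if needed. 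By hypothesis $(x_i)_{i \in I}$ does phase retrieval for $\mathcal{H}_{\pi_1}$ and $(y_j)_{j \in J}$ does phase retrieval for $\mathcal{H}_{\pi_2}$, so Lemma \ref{lem:pr_HS} applies verbatim to conclude that $(x_i \otimes y_j)_{(i,j)}$ does phase retrieval for $HS(\mathcal{H}_{\pi_2}, \mathcal{H}_{\pi_1}) \cong \mathcal{H}_{\pi_1} \otimes \mathcal{H}_{\pi_2} = \mathcal{H}_\pi$. Translating back through the isometric isomorphism, this says precisely that $\eta_1 \otimes \eta_2$ does phase retrieval for $\pi$, which is the claim.

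The one genuine point requiring attention — what I expect to be the main (if modest) obstacle — is the bookkeeping around conjugation: matching the convention for the outer tensor product $\pi_1 \otimes \pi_2$ acting on $\mathcal{H}_{\pi_1} \otimes \mathcal{H}_{\pi_2}$ with the Hilbert–Schmidt picture, where the natural action on $HS(\mathcal{H}_{\pi_2}, \mathcal{H}_{\pi_1})$ is $A \mapsto \pi_1(g_1) A \pi_2(g_2)^*$, which carries $x \otimes \bar{y}$ to $(\pi_1(g_1)x) \otimes \overline{\pi_2(g_2)y}$. I would handle this by stating the identification explicitly and noting once and for all that the phase retrieval property of a vector is unaffected by passing to the contragredient via complex conjugation, so that it is harmless to work with $\overline{\pi_2}$ and $\overline{\eta_2}$ in place of $\pi_2$ and $\eta_2$. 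Everything else is a direct citation of Lemma \ref{lem:pr_HS} together with the trivial remark (already noted in the text) that the statement extends by induction to any finite number of tensor factors, which is what is needed for Theorem \ref{p_group}.
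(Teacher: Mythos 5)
Your proposal is correct and follows exactly the route the paper intends: the text derives this lemma as a direct consequence of Lemma \ref{lem:pr_HS} via the identification of $\mathcal{H}_{\pi_1}\otimes\mathcal{H}_{\pi_2}$ with $HS(\mathcal{H}_{\pi_2},\mathcal{H}_{\pi_1})$, and your careful handling of the conjugate-linearity in the second factor (passing to the contragredient, which preserves moduli of inner products and $\mathbb{T}$-orbits) is precisely the bookkeeping needed to make that identification rigorous.
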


\vskip0.5cm \noindent

Before we turn to the study of phase retrieval properties for $p$-groups, we will need to establish several auxiliary notions and results. To a large part, these notions are motivated by the desire to adapt the inductive approach developed for the Lie group case to the $p$-group setting. They mostly concern the inductive construction of irreducible representations. But we will also need a finite group analog to the full support property, which is necessarily of a more quantitative nature. We start by formulating this analog: 

\begin{definition} \label{defn:p0}
Let $(\pi,\mathcal{H}_\pi)$ denote a unitary representation of a finite group $G$. Given a vector $\eta \in \mathcal{H}_\pi$, we let 
\[
p_0(\pi,\eta) = \max_{f \in \mathcal{H}_\pi \setminus \{ 0 \}} \left( 1- \frac{|{\rm supp}(V_\eta f)|}{|G|} \right) 
\] as well as 
\[
p_0(\pi) = \min \{ p_0(\pi,\eta) : \eta \in \mathcal{H}_\pi \setminus \{ 0 \} \}~. 
\]
\end{definition}

The following remark formulates an analog to Remark \ref{rem:pr_factor_group} for the quantity $p_0$:
\begin{remark} \label{rem:p_0_proj_kernel}
 Let $\pi$ be a representation of a finite group $G$, and $K \lhd G$ is a normal subgroup contained in the projective kernel of $\pi$. Picking any system $W \subset G$ of representatives modulo $K$, it is easy to see that 
 \[
p_0(\pi,\eta) = \max_{f \in \mathcal{H}_\pi \setminus \{ 0 \}} \left(1- \frac{\left|{\rm supp}\left((V_\eta f)|_W\right) \right|}{|W|} \right)
\]
\end{remark}

\begin{remark} \label{rem:p_0}
 $p_0(\pi,\eta)$ can be interpreted as the maximal proportion of zeros occurring in the matrix coefficients $V_\eta f$ for an arbitrary nonzero vector $f$. This quantity is therefore related to sampling sets: For any set $A \subset G$ satisfying $|A| > p_0(\pi,\eta) |G|$, the sampled coefficient map
 \[
 V_\eta|_A : f \mapsto (V_\eta f)|_A
 \] has trivial kernel by definition of $p_0(\pi,\eta)$, hence it is injective. 
 
 This property is also related to the full spark property, studied in the representation theoretic context in \cite{MR4121508,MR3303677}. In fact, for any representation $\pi$ having the full spark property, one can derive 
 \[
 p_0(\pi) = \frac{d_\pi -1}{|G|}~.
 \] 
 
 A related result was derived by Malikiosis in \cite{MR3303677}, who showed that, for suitable choices of $\eta \in \mathbb{C}^n$ the system $(\pi(k,l,0) \eta)_{k,l \in \mathbb{Z}_n}$ has the full spark property, where $\pi$ is the Schr\"odinger representation from Example \ref{ex:finite_Heisenberg}, associated to the finite cyclic group $\mathbb{Z}_n$ of order $n$. Since $W = \{(k,l,0): k,l = 0,\ldots, n-1 \} \subset \mathbb{H}_n$ is a system of representatives modulo the center of $\mathbb{H}$, which is contained in the projective kernel of $\pi$, Remark \ref{rem:p_0_proj_kernel} allows to determine  
 \begin{equation} \label{eqn:p_0_Schroedinger}
 p_0(\pi) = \frac{n-1}{n^2}~.
 \end{equation} 
\end{remark}

\begin{remark} \label{rem:p_0_vs_pr}
In our subsequent arguments (see Lemma \ref{lem:pr_inductive} below), an upper estimate for the quantity $p_0(\tau)$ for a certain representation of a suitably chosen subgroup $G_0$ of $G$ will play an important role, as a sufficient condition for phase retrieval. It is noteworthy that the phase retrieval property of $\pi$ also entails a necessary lower estimate for $p_0(\pi)$, as follows: Assuming that the vector $\eta \in \mathcal{H}_\pi$ does phase retrieval for $\pi$, Lemma \ref{lem:basics_pr}(b) implies that for any nonzero $f,g \in \mathcal{H}_\pi$, the pointwise product $(V_\eta f) \cdot (V_\eta g)$ is not identically zero. Letting $A = {\rm supp}(V_\eta f) \subset G$ and $g = \pi(y) f$ for $y \in G$ yields via
\[
V_\eta g (x) = \langle \pi(y) f, \pi(x) \eta \rangle = V_\eta f (y^{-1}x)
\]
that 
$yA \cap A \not= \emptyset$, or equivalently: $y \in A \cdot A^{-1}$. Hence we have shown 
\[
A \cdot A^{-1} = G~,
\] for the support $A$ of any nonzero matrix coefficient $V_\eta f$. This observation, together with Remark \ref{rem:p_0}, allows to derive the estimate
\[
p_0(\pi,\eta) \le 1-|G/K|^{-1/2}~,
\]
where $K \lhd G$ denotes the projective kernel of $\pi$. 
\end{remark}

For the formulation of the next lemma, we introduce a new piece of terminology: Given a finite dimensional Hilbert space $\mathcal{H}$, we call a set $M \subset \mathcal{H}$ \textbf{real Zariski closed} if there exist mappings 
\[
f_1,\ldots,f_k : \mathcal{H} \to \mathbb{R}
\]
that depend polynomially on the real and imaginary parts of arbitrarily chosen complex linear coordinates on $\mathcal{H}$, and such that 
\[
M = \{ x\in \mathcal{H}~:~ f_1(x) = \ldots = f_k(x) = 0 \}~.
\]
We call $M$ \textbf{real Zariski open} if its complement is real Zariski closed.

\begin{lemma} \label{lem:Zariski}
Let $(\pi,\mathcal{H}_\pi)$ denote a finite-dimensional representation of the finite group $G$. Let
\[
M_1 = \{ \eta \in \mathcal{H}_\pi : \eta \mbox{ does phase retrieval } \}
\]
and 
\[
M_2 = \{ \eta \in \mathcal{H}_\pi : p_0(\pi,\eta) = p_0(\pi) \}~.
\]
Then $M_1$ and $M_2$ are real Zariski open. If $\pi$ does phase retrieval, then $M_1 \cap M_2$ is nonempty and real Zariski open. In particular $M_1 \cap M_2 \subset \mathcal{H}_\pi$ is of full Lebesgue measure. 
\end{lemma}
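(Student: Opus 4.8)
The strategy is to show that each of $M_1$ and $M_2$ is the complement of the zero set of finitely many real polynomials in the real and imaginary parts of the coordinates of $\eta$, and then to combine the two using the fact that the intersection of a nonempty real Zariski open set with a real Zariski open set is again nonempty real Zariski open, and of full Lebesgue measure. The key device is that ``failure of phase retrieval'' and ``$p_0(\pi,\eta) > p_0(\pi)$'' can each be detected by the simultaneous vanishing of certain polynomial expressions, once one quantifies carefully over the finitely many relevant combinatorial choices (sign patterns, subsets of $G$, etc.).

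First I would handle $M_1$. By Lemma~\ref{lem:basics_pr}(a) we may assume $\eta$ has dense (hence full) span, which is itself a real Zariski open condition; on this set, failure of phase retrieval for $\eta$ means there exist linearly independent $f,g \in \mathcal{H}_\pi$ with $|V_\eta f(x)| = |V_\eta g(x)|$ for all $x \in G$. The standard trick (as in the finite frame literature) is: for each of the finitely many sign vectors $\varepsilon \in \{+1,-1\}^G$, consider the real-algebraic condition that the system $\{|V_\eta f(x)|^2 = |V_\eta g(x)|^2 : x\in G\}$ together with $f - \varepsilon\!\cdot\! g$ (interpreted appropriately, e.g. via $\langle f,\pi(x)\eta\rangle = \varepsilon_x \overline{\langle g, \pi(x)\eta\rangle}$ after a global phase normalization) has a nontrivial solution pair. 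Each such ``does there exist a bad pair $(f,g)$'' is a statement about solvability of a polynomial system; the set of $\eta$ for which it is solvable is the projection of a real algebraic set and hence (by, e.g., Tarski--Seidenberg, or more concretely by the resultant/elimination arguments used for generic phase retrieval) is contained in a proper real algebraic subset, \emph{provided} $M_1 \neq \emptyset$. Since we are given that $\pi$ does phase retrieval, $M_1$ is nonempty, so its complement, being semialgebraic and not all of $\mathcal{H}_\pi$, is contained in a proper real Zariski closed set; and in fact one checks directly that the complement of $M_1$ is exactly real Zariski closed, because the defining conditions are unions over finitely many sign patterns of algebraic conditions, and a finite union of real Zariski closed sets is real Zariski closed.

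Next, $M_2$. Here the point is that $p_0(\pi,\eta)$ is determined by the combinatorics of which sets $A \subset G$ can arise as (a superset of) ${\rm supp}(V_\eta f)$ for nonzero $f$. For a fixed $A \subset G$, the condition ``there exists $0\neq f$ with $V_\eta f$ supported in $A$'' is equivalent to ``the linear map $f \mapsto (V_\eta f(x))_{x \in G\setminus A}$ has nontrivial kernel'', i.e. to the vanishing of all maximal minors of that matrix, which are polynomials in $\eta$. Thus for each $A$, the set $Z_A = \{\eta : \exists\, 0\neq f,\ {\rm supp}(V_\eta f)\subseteq A\}$ is real Zariski closed. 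Now $p_0(\pi,\eta) = 1 - |G|^{-1}\min\{|A| : \eta \in Z_A,\ A \neq G\}$ wait — more precisely $p_0(\pi,\eta) = 1 - |G|^{-1}\min\{ |{\rm supp}(V_\eta f)| : 0\neq f\}$, and $|{\rm supp}(V_\eta f)|$ is the smallest $|A|$ with $\eta\in Z_A$ and $V_\eta f$ supported in $A$; since $p_0(\pi) = \min_\eta p_0(\pi,\eta)$ corresponds to some minimal cardinality $m_0 := |G|(1 - p_0(\pi))$, we have $\eta \notin M_2$ iff $\eta \in Z_A$ for some $A$ with $|A| < m_0$. Hence $\mathcal{H}_\pi \setminus M_2 = \bigcup_{|A| < m_0} Z_A$ is a finite union of real Zariski closed sets, so $M_2$ is real Zariski open; and it is nonempty by the very definition of $p_0(\pi)$ as a minimum (which is attained, since there are only finitely many possible support patterns and the minimum over $\eta$ of $p_0(\pi,\eta)$ ranges over a finite set of values, each achieved on a set of the form $Z_A \setminus \bigcup_{|A'|<|A|}Z_{A'}$).

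Finally, for the combination: $M_1$ is nonempty (hypothesis) and real Zariski open, $M_2$ is nonempty and real Zariski open. A nonempty real Zariski open set is the complement of a proper real algebraic set, hence its complement has Lebesgue measure zero (a proper real algebraic subset of $\mathbb{R}^N$ is a Lebesgue-null set — this is the one genuinely analytic input, and it is standard). Therefore $M_1 \cap M_2$ is real Zariski open with null complement $(\mathcal{H}_\pi\setminus M_1)\cup(\mathcal{H}_\pi\setminus M_2)$, hence of full Lebesgue measure, and in particular nonempty. The main obstacle I anticipate is making the argument for $M_1$ fully rigorous without invoking heavier machinery: one must carefully encode ``$\exists$ a bad pair $(f,g)$'' as a genuine real polynomial condition on $\eta$ alone (eliminating the existential quantifier over $f,g$). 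This is where one either cites the generic-phase-retrieval algebraic-geometry literature or argues directly that, after normalizing the global phase and using the sign-pattern decomposition, the bad locus is a finite union of projections of real algebraic sets, each of which is contained in a real algebraic set because the ``good'' locus $M_1$ is assumed nonempty. The $M_2$ part, by contrast, is clean once one observes that support patterns are a finite combinatorial gadget and each ``small support exists'' condition is a rank-drop (minor-vanishing) condition.
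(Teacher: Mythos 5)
Your treatment of $M_2$ and of the final combination step is correct and essentially the paper's own argument: for each subset $B\subset G$ the existence of a nonzero $f$ with $V_\eta f$ vanishing on $B$ is a rank-drop condition on the corresponding submatrix of the (conjugate-linear in $\eta$) matrix of $V_\eta$, expressed through vanishing of $d_\pi\times d_\pi$ minors; the complement of $M_2$ is then a finite union of real Zariski closed sets, and a nonempty real Zariski open subset of $\mathcal{H}_\pi$ has complement of Lebesgue measure zero, so $M_1\cap M_2$ is nonempty, real Zariski open, and of full measure.

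The genuine gap is in your argument for $M_1$. Two concrete problems. First, decomposing over sign vectors $\varepsilon\in\{+1,-1\}^G$ is inadequate in a complex Hilbert space: the ambiguity in passing from $|V_\eta f(x)|$ to $V_\eta f(x)$ is a phase in $\mathbb{T}$, not a sign, so the ``bad pair'' locus is not a finite union over sign patterns of algebraic conditions, and the finiteness that makes your union-of-closed-sets argument work is lost. Second, even with a correct algebraic description of the bad triples $(f,g,\eta)$, eliminating the existential quantifier over $(f,g)$ by projection yields a priori only a semialgebraic set (Tarski--Seidenberg); your assertion that ``one checks directly that the complement of $M_1$ is exactly real Zariski closed'' is precisely the content that needs proof and is not supplied. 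What your argument delivers is at best that the complement of $M_1$ is contained in a proper real algebraic subset, and only under the extra hypothesis $M_1\neq\emptyset$, whereas the lemma asserts unconditionally that $M_1$ is real Zariski open. (Containment in a proper variety would still suffice for the full-measure conclusion, but not for the openness claim as stated.) The paper avoids all of this by citing Lemma 2.5 of \cite{bartuselphase} for the statement about $M_1$; a self-contained proof would have to encode failure of phase retrieval by polynomial conditions in $\eta$ alone, e.g.\ via rank conditions on associated Hermitian forms, rather than by quantifier elimination over the pair $(f,g)$.
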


\begin{proof}
For the statement concerning $M_1$ we refer to Lemma 2.5 of \cite{bartuselphase}. 
For $M_2$, we identify $\mathcal{H}_\pi$ with $\mathbb{C}^{d_\pi}$ (using any choice of basis in $\mathcal{H}_\pi$), identify $\mathbb{C}^G$ with $\mathbb{C}^{|G|}$ by a similar procedure, and use these identifications to associate to the linear operator $V_\eta: \mathbb{C}^{d_\pi} \to \mathbb{C}^G$ the describing matrix $\mathcal{A}_\eta \in \mathbb{C}^{|G| \times d_\pi}$. Given any matrix $\mathcal{A} \in \mathbb{C}^{|G|  \times d_\pi}$, and an arbitrary set $B \subset \{1,\ldots,|G|\}$, we let $\mathcal{A}^B$ denote the submatrix obtained by picking the lines of $\mathcal{A}$ corresponding to indices contained in $B$. Then 
\[
p_0(\pi,\eta) \le \frac{k}{|G|}
\] is equivalent to the statement that, for arbitrary sets $B \subset \{1,\ldots,|G|\}$ of cardinality $k+1$, the submatrix $\mathcal{A}_{\eta}^B$ has rank $d_\pi$. But the set
\[
\mathcal{Z}_{k} = \{ \mathcal{A} \in  \mathbb{C}^{|G| \times d_\pi} : \forall B \subset \{1,\ldots,|G| \} \mbox{ with } |B| = k+1 ~,~ {\rm rank}(\mathcal{A}^B) = d_\pi \} 
\] can be described in terms of $d_\pi \times d_\pi$-subdeterminants, hence is real Zariski-open. Now we have that
\begin{eqnarray*}
p_0(\pi,\eta) = p_0(\pi) & \Leftrightarrow & p_0(\pi,\eta) \le p_0(\pi) \\
 & \Leftrightarrow & \mathcal{A}_\eta \in \mathcal{Z}_{\pi_0(\pi) |G|}~.
\end{eqnarray*}
Since the map $\eta \mapsto A_\eta$ is conjugate linear, we see that $M_2$ is conjugate linear preimage of a real Zariski open set, hence real Zariski open itself. 
Now if $\pi$ does phase retrieval, then $M_1$ is nonempty, whereas $M_2$ is nonempty by definition. Nonempty real Zariski open sets are open and dense in the standard topology; in fact they are of full Lebesgue measure. Then $M_1 \cap M_2$ is nonempty, and real Zariski-open, hence it inherits the remaining properties as well. 
\end{proof}

We will now start to introduce the central device of our treatment of $p$-groups, namely a version of Kirillov's lemma for this class of groups. For this purpose, we introduce further notation. Let $G$ denote a nonabelian $p$-group. Let $[w,y] = w^{-1} y^{-1} w y$ be the commutator of $w,y \in G$. Furthermore, write $w^y = y^{-1} w y$ for the conjugation product of $w$ and $y$. We then have the well-known relations
\begin{equation}\label{eqn:comm_rel}
 [xz,y] = [x,y]^z [z,y]~,~[x,zy] = [x,y] [x,z]^y ~,
\end{equation} which can be verified by direct calculation. 

We introduce 
\[
 Z_1(G) = \{ y \in G: y Z(G) \in Z(G/Z(G)) \}~.
\] Since $G$ is nilpotent, $G/Z(G)$ is as well. In particular, $G/Z(G)$ has nontrivial center, which implies that $Z(G) \subsetneq Z_1(G)$, unless $G$ is abelian.  
The following elementary lemma will be used repeatedly: 
\begin{lemma} \label{lem:bihom}
The mapping 
\[
 [ \cdot, \cdot ] : G \times Z_1(G) \to Z(G)~,~(x,y) \mapsto [x,y]
\] is a well-defined bihomomorphism, i.e., for all $x \in G, y \in Z_1(G)$, the maps $[x,\cdot]$ and $[\cdot,y]$ are homomorphisms into $Z(G)$. 
\end{lemma}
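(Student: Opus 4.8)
The plan is to verify the three claims in Lemma~\ref{lem:bihom} --- well-definedness of the target, and homomorphy in each argument --- all of which reduce to the commutator identities~(\ref{eqn:comm_rel}) together with the defining properties of $Z_1(G)$ and $Z(G)$. First I would settle that the map indeed lands in $Z(G)$: for $x \in G$ and $y \in Z_1(G)$, the coset $yZ(G)$ is central in $G/Z(G)$, so $[x,y] \in Z(G)$ follows by reading the commutator relation modulo $Z(G)$ (the image of $[x,y]$ in $G/Z(G)$ is the commutator of $\bar x$ and $\bar y$, which is trivial since $\bar y$ is central). Hence $[\cdot,\cdot]$ restricted to $G \times Z_1(G)$ is a well-defined map into $Z(G)$, and in particular all conjugation exponents appearing below act trivially on its values.

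Next I would prove homomorphy in the second argument: fix $x \in G$ and take $y, y' \in Z_1(G)$. Since $Z_1(G)$ is a subgroup (it is the preimage of $Z(G/Z(G))$ under the quotient map), $yy' \in Z_1(G)$, so all three of $[x,y], [x,y'], [x,yy']$ lie in $Z(G)$. The second identity in~(\ref{eqn:comm_rel}) gives $[x,yy'] = [x,y'][x,y]^{y'}$, and because $[x,y] \in Z(G)$ we have $[x,y]^{y'} = [x,y]$; thus $[x,yy'] = [x,y'][x,y] = [x,y][x,y']$ (the last equality because $Z(G)$ is abelian). So $[x,\cdot]$ is a homomorphism $Z_1(G) \to Z(G)$. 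The argument for the first variable is symmetric in structure: fix $y \in Z_1(G)$ and take $x, x' \in G$; apply the first identity $[xx',y] = [x,y]^{x'}[x',y]$ and use $[x,y] \in Z(G)$ to drop the conjugation, obtaining $[xx',y] = [x,y][x',y]$, so $[\cdot,y]$ is a homomorphism $G \to Z(G)$.

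The only point requiring a little care --- and the closest thing to an obstacle, though it is minor --- is making sure the conjugation exponents in~(\ref{eqn:comm_rel}) are genuinely neutralized, which is exactly why the hypothesis $y \in Z_1(G)$ (rather than $y \in G$) is needed: without it $[x,y]$ need not be central and the conjugation $[x,y]^{y'}$ or $[x,y]^{x'}$ cannot be removed. I would therefore make the ``$[x,y]\in Z(G)$'' step explicit and invoke it at each place a conjugation exponent appears. Everything else is a direct substitution into~(\ref{eqn:comm_rel}) plus commutativity of $Z(G)$, so the proof is short; I would present it as three short paragraphs mirroring the three assertions.
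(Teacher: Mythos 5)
Your proposal is correct and follows essentially the same route as the paper: establish $[x,y]\in Z(G)$ by passing to $G/Z(G)$, then apply the identities (\ref{eqn:comm_rel}) and kill the conjugation exponents using centrality of $[x,y]$. No discrepancies.
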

\begin{proof}
Let $x \in G, y \in Z_1(G)$. 
  The fact that $y Z(G)$ is central in $G/Z(G)$ implies 
  \[
   [x,y] Z(G) = [x Z(G), y Z(G)] = e_G Z(G)~,
  \]
 hence $[x,y] \in Z(G)$, and $[\cdot, \cdot] : G \times Z_1(G) \to Z(G)$ is well-defined. Furthermore, (\ref{eqn:comm_rel}) yields for $x,z \in G$  
\[
  [xz,y] = [x,y]^z [z,y] = [x,y] [z,y]~,
\] since the conjugation action is trivial on $[x,y] \in Z(G)$. 
If $z \in Z_1(G)$, we similarly get
\[
 [x,zy] = [x,y] [x,z]^y = [x,y] [x,z] = [x,z] [x,y]~,
\] since $Z(G)$ is abelian. 
\end{proof}

The following is the announced $p$-group analog of Kirillov's lemma for nilpotent Lie groups \cite[Lemma 1.1.12]{MR1070979}. 
\begin{lemma}
\label{Kirillov} Let $G$ be a
non-abelian $p$-group with cyclic center $Z\left(  G\right)  $.
Then there exist $z \in Z(G), y \in Z_1(G)$ and $x \in G$ as well as an integer $r \ge 1$ such that the following hold:
\begin{enumerate}
 \item[(a)] $z = [x,y]$, $\langle z \rangle$ is a subgroup of $Z(G$) with $\langle z \rangle = [G,y] \cong \mathbb{Z}_{p^r}$.
 \item[(b)] $A = \langle y, Z(G) \rangle$ is an abelian normal subgroup of $G$ with $A/Z(G) = \langle y Z(G) \rangle \cong \mathbb{Z}_{p^r}$.
 \item[(c)] Define $G_0 = \{ w : [w,y] = e_G \}$. Then $G_0$ is a normal subgroup of $G$ with $G/G_0 = \langle x G_0 \rangle \cong \mathbb{Z}_{p^r}$. $A < G_0$ is central. 
\end{enumerate}
\end{lemma}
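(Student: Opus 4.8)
The plan is to let the choice of $y$ drive everything else. Since $G$ is non-abelian, $G/Z(G)$ is a nontrivial $p$-group and therefore has nontrivial center, so $Z(G) \subsetneq Z_1(G)$; I would fix any $y \in Z_1(G) \setminus Z(G)$. By Lemma \ref{lem:bihom} the map $[\cdot, y] : G \to Z(G)$ is a group homomorphism, its image $[G,y]$ is a subgroup of the cyclic $p$-group $Z(G)$ and hence cyclic, and it is nontrivial exactly because $y \notin Z(G)$; write $[G,y] \cong \mathbb{Z}_{p^r}$ with $r \ge 1$. Now pick a generator $z$ of $[G,y]$, choose $x \in G$ with $[x,y] = z$ (possible since $[\cdot,y]$ maps onto $[G,y]$), and set $G_0 = \{ w \in G : [w,y] = e_G \} = C_G(y)$, the kernel of $[\cdot,y]$. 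The first isomorphism theorem then already gives $G/G_0 \cong [G,y] = \langle z\rangle \cong \mathbb{Z}_{p^r}$, generated by $xG_0$, and most of (a) and (c).

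The heart of the argument is the claim that the order of $yZ(G)$ in $G/Z(G)$ is again exactly $p^r$; this is what allows a single exponent to serve (a), (b) and (c) simultaneously. I would prove it by two divisibility estimates, each using that $[g,\cdot]$ is a homomorphism on $Z_1(G)$ (Lemma \ref{lem:bihom}). First, for every $g \in G$ we have $[g, y^{p^r}] = [g,y]^{p^r} = e_G$ since $[g,y] \in [G,y]$ has order dividing $p^r$; hence $y^{p^r} \in Z(G)$ and the order of $yZ(G)$ divides $p^r$. Conversely, if $p^a$ denotes that order, then $y^{p^a} \in Z(G)$, so $[g,y]^{p^a} = [g, y^{p^a}] = e_G$ for all $g$, which means the cyclic group $[G,y]$ has exponent dividing $p^a$, i.e.\ $p^r \mid p^a$. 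Thus $a = r$. Consequently $A = \langle y, Z(G)\rangle = \langle y\rangle Z(G)$ is abelian (as $y$ commutes with itself and with the central subgroup $Z(G)$) and satisfies $A/Z(G) = \langle yZ(G)\rangle \cong \mathbb{Z}_{p^r}$, which is statement (b) modulo normality.

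The remaining verifications are routine manipulations with the commutator identities (\ref{eqn:comm_rel}) and the fact that $[y,g] \in Z(G)$ for all $g \in G$ (again Lemma \ref{lem:bihom}). For the normality of $A$: conjugating $y^k z'$ (with $z' \in Z(G)$) by $g$ gives $(y[y,g])^k z' = y^k [y,g]^k z' \in \langle y\rangle Z(G)$. For the normality of $G_0 = C_G(y)$: given $w$ with $[w,y] = e_G$ and $g \in G$, I would use the identity $[w^g, y] = [w^g, (y^{g^{-1}})^g] = [w, y^{g^{-1}}]^g$ together with $y^{g^{-1}} = y[y,g^{-1}] \in yZ(G)$ and $[w, yz'] = [w,y]$ for central $z'$, to conclude $[w^g,y] = e_G$, so $w^g \in G_0$. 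Finally $A \le G_0$ because $A$ centralizes $y$, and $A$ is central in $G_0$ because $Z(G)$ is central in all of $G$ while $y$ centralizes $G_0 = C_G(y)$ by definition; and $G/G_0 \cong \mathbb{Z}_{p^r}$ generated by $xG_0$ was already noted. The only genuinely non-mechanical step is the order computation for $yZ(G)$ above; the normality checks and the identification of $A$ are direct calculations.
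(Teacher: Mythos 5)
Your proposal is correct and follows essentially the same route as the paper: choose $y \in Z_1(G)\setminus Z(G)$, use the bihomomorphism of Lemma \ref{lem:bihom} to identify $[G,y]\cong\mathbb{Z}_{p^r}$ as the image and $G_0=C_G(y)$ as the kernel of $[\cdot,y]$, and then show that $yZ(G)$ has order exactly $p^r$. The only cosmetic differences are that you phrase the order computation as two divisibility estimates over all $g\in G$ (the paper uses the specific element $x$ for the lower bound and a contradiction for the upper bound) and that you verify normality of $G_0$ by hand where the paper simply invokes that kernels are normal.
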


\begin{proof}
Proof of Part (a): Pick any $y \in Z_1(G) \setminus Z(G)$. By Lemma \ref{lem:bihom}, $[G,y]$ is a subgroup contained in the center $Z(G).$  Since the center of $G$ is assumed to be cyclic, $[G,y]$ is cyclic as well, hence $[G,y] \cong \mathbb{Z}_{p^r}$ for some natural number $r$, since $G$ is a $p$-group. The case $r=0$ is equivalent to $[G,y] = \{ e \}$, or $y \in Z(G)$, contrary to the choice of $y$. The existence of $z$ and $x$ is then obvious. 

Proof of Part (b): It is clear that $A$ is an abelian subgroup of $G$. Fixing $w \in G$, and appealing to Lemma \ref{lem:bihom}, we obtain $$ w^{-1} y w = [w,y^{-1}] y = \underbrace{[w,y]^{-1}}_{\in Z(G)} y \in A~. $$  This implies that $\langle y, Z(G) \rangle$ is normal. To complete the proof of Part (b), we show that $y Z(G)$ has order $p^r$ in $G/Z(G)$. In order to see this, we use Lemma \ref{lem:bihom} to compute $[x,y^l] = [x,y]^l = z^l \not= e_G$  for $1 \le l < p^r$, by choice of $z$. This shows in particular $y^l \not\in Z(G)$. 

On the other hand, assuming $y^{p^r} \not\in Z(G)$ would entail the existence of $w \in G$ with $ e_G \not= [w,y^{p^r}] = [w,y]^{p^r} ~.$ This would imply that  $ [w,y] \in [G,y]$ has order $>p^r$, which contradicts (a). 

Proof of Part (c): Note that by definition, $G_0$ is the kernel of the homomorphism $[\cdot,y]:G \to Z(G)$, which has image $[G,y]$, generated by $z = [x,y]$. Hence $G_0$ is a normal subgroup, and $\langle xG_0 \rangle = G/G_0 \cong \langle z \rangle \cong \mathbb{Z}_{p^r}$ follows by the first homomorphism theorem. $y$ is central in $G_0$ by definition of the latter, and $Z(G) \subset G_0$ is central even in $G$, hence $A = \langle y, Z(G) \rangle$ is central in $G_0$. 
\end{proof}

\begin{remark}
 We note that the case $[G,y] \subsetneq Z(G)$ may well occur. For a concrete example, fix a prime number $p$ and consider
 $ G = \mathbb{Z}_p \times \mathbb{Z}_p \times \mathbb{Z}_{p^2}$,
 with group law
 \[
  (k,l,m)\cdot(k',l',m') = (k+k',l+l',m+m'-pl'k)~.
 \] 
 The center of this group is given by $\{ 0 \} \times \{ 0 \} \times \mathbb{Z}_{p^2}$, whereas 
 $[G,y] = \langle (0,0,p) \rangle$. 
\end{remark}

The notation of the lemma suggests that the group $H = \langle x,y, z \rangle$, for $x,y,z$ from Lemma \ref{Kirillov} is isomorphic to a Heisenberg group. Note that the analogous statement in the Lie group case, i.e. that the closed subgroup associated to the Lie subalgebra generated by $X,Y,Z$ is isomorphic to the three-dimensional Heisenberg group, holds true. The next example shows however that the $p$-group case is more diverse. 
Nevertheless the Schr\"odinger representations from Example \ref{ex:finite_Heisenberg} constitute important basic building blocks in our inductive strategy to establish phase retrieval for general $p$-groups, and the following example is intended to shed some light on this phenomenon. 
\begin{example} \label{ex:finite_Kirillov} 
Let $H = \mathbb{H}_{p^r}$ denote a finite Heisenberg group associated to the cyclic group $\mathbb{Z}_{p^r}$, as given by (\ref{eqn:fH_set}) and (\ref{eqn:fH_law}). Define $x=(1,0,0),y = (0,1,0)$ and $z = (0,0,-1)$. Then one has 
$Z(H) = \langle z \rangle = Z_0$, 
\[
 Z_1(H) = H = \langle x,y, z \rangle = \langle x,y \rangle~,
\] and $[x,y] = z$, as well as all the other properties from Lemma \ref{Kirillov}.  
 It is tempting to assume that the relations in this lemma already characterize $H$ up to isomorphism.

 But this is not the case. For the construction of a counterexample, fix a prime number $p$ and an integer $r \ge 1$. 
 Consider the group $G = \mathbb{Z}_{p^{2r}} \times \mathbb{Z}_{p^r}$, with group law 
 \begin{equation} \label{eqn:group_ex_fK}
  (k,l)(k',l') = (k+(1-lp^r)k',l+l')~,0 \le k,k' < p^{2r}~, 0 \le l,l' < p^r~.,
 \end{equation} where the operations on the right-hand side are taken modulo $p^{2r}, p^r$, respectively. Using
 \[
  (1-lp^r)(1-l'p^r) \equiv 1-(l+l')p^r ~~\mbox{ mod } p^{2r} ~,
 \] one easily verifies that (\ref{eqn:group_ex_fK}) is a well-defined group law. 
 We let $x = (0,1),y=(1,0)$ and $z = (p^r,0)$, and obtain that $Z(G) = \langle z \rangle = Z_0$, 
 \[
  Z_1(G) = G = \langle x,y,z \rangle = \langle x,y \rangle = \langle y \rangle \rtimes \langle x \rangle~, 
 \] as well as $[x,y] = z$.
 
 Again, all remaining properties noted in Lemma \ref{Kirillov} hold for $x,y,z$ also. 
Nonetheless, $G$ is clearly not isomorphic to the Heisenberg group $H$, since it contains elements of order $p^{2r}$. 

In order to exhibit similarities of the newly constructed group to the Heisenberg group case, we now determine certain irreducible unitary representations of $G$. Letting $A = \langle y,z \rangle = \mathbb{Z}_{p^{2r}} \times \{ 0 \} \cong \mathbb{Z}_{p^{2r}}$, we let $\xi \in \mathbb{T}$ be a root of unity of order $p^{2r}$, and define the associated character $\chi_\xi \in \widehat{A}$ as 
\[
 \chi_\xi (k,0) = \xi^k ~,~k=0,\ldots,p^{2r}-1~.
\] The dual action of $x$ on the characters is given by 
\[
 x^l \ast \chi_\xi(k,0) = \chi_\xi ((0,-l)(k,0)(0,l)) = \xi^{k (1+lp^r)} = \chi_{\xi^{1+lp^r}} (k,0)~.
\]
Now let $\xi_0$ denote an arbitrary \textit{primitive root of order $p^{2r}$}. Then we have for $0 \le l < p^r$ that 
\[
 \xi_0^{(1+lp^r)} = \xi_0 \Longleftrightarrow \xi_0^{lp^r} = 1 \Longleftrightarrow l = 0 ~,
\] i.e., the stabilizer of $\xi_0$ under the dual action is trivial. 

We now consider $\pi = {\rm ind}_A^G \chi_{\xi_0}$. By Mackey's theorem, $\pi$ is an irreducible representation, and since $\langle y,z \rangle$ is a normal subgroup, we can realize 
$\pi$ on $l^2(G/A) = l^2(\langle x \rangle)$ by the formula  from page 79 of \cite{MR3012851}, which yields
\[
 \left[ \pi(y^k x^l) \varphi \right](x^j) = \chi_{\xi_0}(x^{-j} y^k x^j) \varphi(x^{j-l})~.
\]
Since $\left[  x,y\right]  =z,$
$x^{-j}\left(  y^{k}\right)  x^{j}=y^{k}z^{-kj}$, and we obtain
\begin{eqnarray*}
 \left[ \pi (y^k x^l) \varphi \right] (x^j) & = & \chi_{\xi_0} (y^{k}z^{-kj}) \varphi(x^{j-l}) \\
 & = &  \xi_0^{k} \xi_1^{-kj} \varphi(x^{j-l})  
\end{eqnarray*} where $\xi_1 = \xi_0^{p^r}$ is a \textit{primitive root of order $p^r$}. This observation may be rephrased as 
\[
 \pi(y^k x^l) = \alpha(k,l) M_k T_l~,
\] with $\alpha(k,l) \in \mathbb{T}$. This shows that, even though $G$ and $H$ are clearly non-isomorphic, the operators in the realization of $\pi$ differ from a Schr\"odinger representation only by scalar factors, and Lemma \ref{lem:basics_pr} (c) entails that the vectors doing phase retrieval for $\pi$ are precisely the ones that do phase retrieval for the Schr\"odinger representation. 
\end{example}

The next definition introduces a bookkeeping device, intended to describe the construction of an arbitrary irreducible representations by repeated induction steps. 

\begin{definition} \label{defn:K-triple}
Let $G$ denote a $p$-group, and $\pi$ an irreducible, faithful representation of $G$. $(G_0,r,\tau)$ is called a \textbf{K-triple for $\pi$} if $r \in \mathbb{N}$, $G_0 \lhd G$, $\tau$ is a representation of $G_0$ such that $\pi \simeq {\rm ind}_{G_0}^G \tau$, and there exist $x \in G$, $y \in Z_1(G) \setminus Z(G)$ and $z \in Z(G)$ with the following properties: 
\begin{enumerate}
    \item[(i)] $[x,y] = z$, and $\langle z \rangle = [G,y] \cong \mathbb{Z}_{p^r}$.
    \item[(ii)] $G_0$ is the centralizer of $y$.
    \item[(iii)] $G/G_0 = \langle xG_0 \rangle \cong \mathbb{Z}_{p^r}$. 
\end{enumerate}
The K-triple is called \textit{split}, if $x$ can be chosen such that $x^{p^r} = e$. 
\end{definition}

It is easy to see that a K-triple $(G_0,r,\tau)$ is split if and only if the short exact sequence 
\[
 1 \to G_0 \to G \to G/G_0 \to 1 
\] splits, i.e. when $G$ is the inner semidirect product $G_0 \rtimes \langle x' \rangle$ for a suitable choice of $x'$. Here the ``only if''-part is clear. For the converse, assume that a K-triple $(G_0,r,\tau)$ is given, and that in addition $G = G_0 \rtimes \langle x' \rangle$ for a suitable choice of $x'$. This entails in particular that $x'$ has order $p^r$. Since $G_0$ is by definition the kernel of the homomorphism $[\cdot,y]: G \to \langle z \rangle$, we get that $s \mapsto [(x')^s,y] \in \langle z \rangle$ is onto, hence $z' = [x',y]$ fulfills $\langle z' \rangle = \langle z \rangle$. This shows that $x',y,z' \in G$ have the properties required of $x,y,z$ in Definition \ref{defn:K-triple} to show that $(G_0,r,\tau)$ is a split K-triple.

The following lemma provides a central result for our inductive approach. It is the $p$-group analog to Step 1 in the proof of Theorem \ref{thm:main_1}, and its proof is remarkably similar to the proof for the Lie group setting. 

\begin{lemma}
\label{lem:exists_K_triple} Let $G$ denote a nonabelian $p$-group, and $\pi$ a faithful irreducible representation of $G$. Then there exists a K-triple for $\pi$.
\end{lemma}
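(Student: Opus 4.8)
\textbf{Plan for the proof of Lemma \ref{lem:exists_K_triple}.}
The plan is to mirror Step 1 of the proof of Theorem \ref{thm:main_1}, using the $p$-group Kirillov lemma (Lemma \ref{Kirillov}) in place of its Lie-theoretic counterpart and Mackey's theory of induced representations for finite groups in place of the semidirect-product machinery. Since $\pi$ is faithful and $G$ is a nonabelian nilpotent group, its center $Z(G)$ is nontrivial, and $\pi|_{Z(G)}$ is a multiple of a character by Schur's lemma; faithfulness of $\pi$ then forces this character to be a faithful character of $Z(G)$, so $Z(G)$ must be cyclic. This puts us in position to apply Lemma \ref{Kirillov}, yielding $z \in Z(G)$, $y \in Z_1(G) \setminus Z(G)$, $x \in G$, an integer $r \ge 1$, the abelian normal subgroup $A = \langle y, Z(G) \rangle$, and the normal subgroup $G_0 = \{ w : [w,y] = e_G \}$ (the centralizer of $y$) with $G/G_0 = \langle x G_0 \rangle \cong \mathbb{Z}_{p^r}$ and $A < G_0$ central. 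Conditions (i)--(iii) of Definition \ref{defn:K-triple} will then be immediate from Lemma \ref{Kirillov}, so the remaining content is to produce a representation $\tau$ of $G_0$ with $\pi \simeq {\rm ind}_{G_0}^G \tau$.

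For that, I would run the Mackey argument verbatim in analogy with Step 1 of Theorem \ref{thm:main_1}. Consider the dual action of $G$ on $\widehat{A}$ given by $(g \star \chi)(a) = \chi(g^{-1} a g)$; for $\chi \in \widehat{A}$ let $G_\chi$ be its stabilizer and set $\widetilde{G}_\chi = \{ \tau \in \widehat{G_\chi} : \tau|_A \text{ is a multiple of } \chi \}$. By Mackey's theory for finite groups, every irreducible representation of $G$ is of the form ${\rm ind}_{G_\chi}^G \tau$ for some $\chi$ and some $\tau \in \widetilde{G}_\chi$; in particular this holds for $\pi$. It then suffices to show that for the relevant $\chi$ one has $G_\chi = G_0$. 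The inclusion $G_0 \subseteq G_\chi$ follows because $A < G_0$ is central, so $G_0$ acts trivially on $A$ and hence on $\widehat{A}$. For the reverse inclusion, since $\pi$ is faithful, $\pi|_{Z(G)}$ is a faithful character, and applying Lemma \ref{lem:ind_char} (with $Z = Z(G)$, $N = A$, $\tau$ the inducing representation) shows that $\chi|_{Z(G)}$ must be this faithful character, hence $\chi$ restricted to $\langle z \rangle = [G,y]$ is a primitive character of $\mathbb{Z}_{p^r}$. Now for $w \in G$ one computes, using that $[\cdot,y]: G \to \langle z \rangle$ is a homomorphism with $w^{-1} y w = [w,y]^{-1} y$ (as in the proof of Lemma \ref{Kirillov}(b)), that
\[
(w \star \chi)(y) = \chi(w^{-1} y w) = \chi([w,y]^{-1}) \chi(y)~,
\]
so $w \star \chi = \chi$ forces $\chi([w,y]) = 1$, and since $\chi$ is primitive on $\langle z \rangle$ this means $[w,y] = e_G$, i.e. $w \in G_0$. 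Thus $G_\chi = G_0$, and setting $\tau$ to be the inducing representation furnished by Mackey completes the construction; irreducibility of $\tau$ follows from irreducibility of $\pi$ as in the Lie case.

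The main obstacle — really the only nonroutine point — is making sure that the character $\chi$ on $A$ that arises from Mackey's decomposition of $\pi$ actually restricts to a \emph{primitive} character on $\langle z \rangle$, since without this the computation $(w \star \chi)(y) = \chi([w,y]^{-1})\chi(y)$ only shows $[w,y]$ lies in the (possibly nontrivial) kernel of $\chi|_{\langle z \rangle}$ rather than $[w,y] = e_G$. This is exactly where faithfulness of $\pi$ is essential: it forces $\pi|_{Z(G)}$, and therefore (via Lemma \ref{lem:ind_char}) $\chi|_{Z(G)}$, to be faithful, which in turn makes $\chi$ primitive on the cyclic subgroup $\langle z \rangle \subseteq Z(G)$. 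One should also take care that $A$ need not equal $Z_1(G)$ or be maximal abelian, but this does not affect the argument: all that is used is that $A$ is abelian, normal in $G$, central in $G_0$, and contains both $y$ and $Z(G)$, all of which are guaranteed by Lemma \ref{Kirillov}. A minor bookkeeping point is to confirm that the $x,y,z$ supplied by Lemma \ref{Kirillov} are literally the witnesses required in Definition \ref{defn:K-triple}(i)--(iii), which they are by construction.
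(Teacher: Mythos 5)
Your proposal is correct and follows essentially the same route as the paper's proof: Schur's lemma plus faithfulness to conclude $Z(G)$ is cyclic, Lemma \ref{Kirillov} to produce $x,y,z,G_0,r$, Mackey's theory to write $\pi \simeq \mathrm{ind}_{G_\chi}^G \tau$, and faithfulness of $\chi$ on $\langle z\rangle$ (via Lemma \ref{lem:ind_char}) to force $G_\chi = G_0$. The only harmless deviations are a bookkeeping slip in your invocation of Lemma \ref{lem:ind_char} (the inducing subgroup is $G_\chi$, not $A$) and that you verify $G_\chi \subseteq G_0$ directly for arbitrary $w$ via $\chi([w,y])=1 \Rightarrow [w,y]=e_G$, whereas the paper uses the coset factorization $g = x^m g_0$ and computes the dual action of powers of $x$ only; both arguments yield the same conclusion.
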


\begin{proof}
Since $\pi$ is irreducible, $\pi(Z(G))$ is a (finite) subgroup of the torus. It follows that $\pi(Z(G))$ is cyclic, and by faithfulness of $\pi$, this entails that $Z(G)$ is cyclic. Hence Lemma \ref{Kirillov} is applicable, and we obtain the group elements $x,y,z$, as well as the normal subgroup $G_0$ and $r \in \mathbb{N}$ such that the properties (i)-(iii) from Definition \ref{defn:K-triple} are fulfilled. It therefore remains to prove the existence of $\tau$. Note that $G_0$ is the centralizer of $y$ by its definition in part (c) of the lemma. 

Let $A = \langle y,Z(G) \rangle$. Then $A$ is an abelian normal subgroup of $G$. Note that this fact can be concluded from the choice according to Lemma \ref{Kirillov}, but it also follows from $z = [x,y] \in Z(G)$ and the choice of $G_0$ as centralizer of $y$. In a similar way, $A$ is seen to be central in $G_0$.

For any given unitary character $\chi$ of $A,$ $G$
acts on $\chi$ via the dual action which is defined as follows. For arbitrary
$g\in G$ and $a\in A,$ $\left[  g\star\chi\right]  \left(  a\right)
=\chi\left(  g^{-1}ag\right).$ Let $G_{\chi}=\left\{  g\in G:g\star\chi
=\chi\right\}  $ be the stabilizer of $\chi$ and let
\[
\widetilde{G}_{\chi}=\left\{  \tau\in\widehat{G_{\chi}}:\tau
|_{A}\text{ is a multiple of }\chi\right\}  ,
\] where $\widehat{G}_\chi$ denotes the set of (equivalence classes of) irreducible representations of $G_\chi$. 
For $\tau\in\widetilde{G}_{\chi},$ $\pi_{\tau}=\mathrm{ind}_{G_{\chi}%
}^{G}\left(  \tau\right)  $ is an irreducible representation of $G.$ In
fact, according to the theory of Mackey \cite[Theorem 4.22]{MR3012851}, every irreducible representation of
$G$ is of this form. 

Hence $\pi\simeq\mathrm{ind}_{G_{\chi}}^{G} \left(  \tau\right)  $ for suitable $\chi \in \widehat{A}$ and $\tau \in \widetilde{G}_{\chi}$. The proof is finished when we have shown that $G_\chi = G_0$. By condition (ii), $A$ is central in $G_0$, which means that the conjugation action of $G_0$ on $A$ is trivial. The same then follows for the dual action on $\widehat{A}$, which establishes $G_0 \subset G_{\chi}$.  

In order to show the reverse inclusion, we take a closer look at the dual action of $x$ and its powers. By Lemma \ref{lem:ind_char} applied with $Z = Z(G)$ and $N= G_0$, the characters obtained by restricting either $\pi$ or $\tau$ to $Z(G)$ must coincide. By assumption, $\pi$ is faithful, hence $\tau|_{Z_0}$ is faithful. Since $Z_0 = \langle z \rangle$, we get that $\tau(z^k) = \xi^k$, for a suitable $p^r$th root of unity $\xi \in \mathbb{T}$. In fact, faithfulness requires that $\xi$ is a \textit{primitive root of unity of order $p^r$}, i.e. $\xi^k = 1$ holds iff $p^r$ divides $k$, for all $k \in \mathbb{Z}$. 

Since $y \in Z_1(G)$, we can appeal to Lemma \ref{lem:bihom} to compute $x^{-m} y x^m =  [x^m,y^{-1}] y = z^{- m} y$ which leads to 
\[
 (x^m \star \chi)(y) = \chi (z^{-m}) \chi(y)  = \xi^{-m} \chi(y)~. 
\] Hence $x^m \star \chi = \chi$ entails $\xi^{-m} = 1$, and since $\xi$ is a primitive root of order $p^r$, this only happens for $m \in p^r \mathbb{Z}$. 

By (iii), every $g \in G$ can be factored uniquely as $g =  x^m g_0$, with $g_0 \in G_0 \subset G_\chi$ and $0 \le m < p^r$. This implies that 
\[
\chi =  g \star \chi = x^m \star \chi 
\] holds iff $m=0$, or equivalently, iff $g \in G_0$. This proves $G_0 = G_\chi$ and thus $\pi \simeq {\rm ind}_{G_0}^G \tau$. 
\end{proof}%

\vskip0.5cm \noindent

We next provide an explicit description of the representation $\pi$ in terms of the data provided by a K-triple. It is again quite analogous to the Lie group case, but this time the analogy requires an additional assumption (which is automatic in the Lie group setting), namely that the group extension $1 \to G_0 \to G \to G/G_0 \to 1$ splits. 

\begin{lemma} \label{lem:ind_rep_explicit}
Let $G$ denote a $p$-group, and $\pi$ an irreducible, faithful representation of $G$, with a split K-triple $(G_0,r,\tau)$. Let $x,y,z \in G$ denote the group elements associated to the triple according to Definition \ref{defn:K-triple}, with the property $x^{p^r} = e$. 
\begin{enumerate}
    \item[(a)] Let $W \subset G_0$ denote a system of representatives modulo $\langle \{ y \} \cup Z(G) \rangle$. Then every element $g \in G$ has a unique factorization
    \[
    g = z' y^k w x^l ~,~ z' \in Z(G),~k,l \in \{ 0,\ldots p^r-1 \}~, w \in W~.
    \]
    \item[(b)] The representation ${\rm ind}_{G_0}^ G \tau$ acts in the Hilbert space $l^{2}\left(  \mathbb{Z}_{p^r},\mathcal{H}_\tau \right)$ by the formula 
    \[
    \left[ \pi(y^k w x^l) \varphi\right] (j) = \xi_1^k \xi_2^{-kj} \tau(x^{-j} w x^j) \varphi(j-l)~, 
    \] where $\xi_1 \in \mathbb{T}$ is a suitable root of unity, and $\xi_2$ is a primitive root of unity of order $p^r$.
\end{enumerate}
\end{lemma}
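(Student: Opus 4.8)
The plan is to mirror Step 2 in the proof of Theorem \ref{thm:main_1}, translating the Lie-group factorization and induced-representation realization into the $p$-group setting, now making essential use of the splitting hypothesis $x^{p^r}=e$.

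\textbf{Proof of part (a).} First I would observe that, by Lemma \ref{Kirillov}(b) and the definition of a K-triple, $A=\langle\{y\}\cup Z(G)\rangle$ is abelian, central in $G_0$, with $A/Z(G)=\langle yZ(G)\rangle\cong\mathbb{Z}_{p^r}$; hence every element of $A$ is uniquely $z'y^k$ with $z'\in Z(G)$, $0\le k<p^r$ (uniqueness of $k$ because $y^k\in Z(G)$ forces $p^r\mid k$ by Lemma \ref{Kirillov}(b)). Since $W$ is by hypothesis a transversal of $A$ in $G_0$, every $g_0\in G_0$ is uniquely $z'y^kw$ with $w\in W$. Finally, by property (iii) of the K-triple, $G/G_0=\langle xG_0\rangle\cong\mathbb{Z}_{p^r}$, and since the triple is split we may take the chosen $x$ with $x^{p^r}=e$; then $\{e,x,\dots,x^{p^r-1}\}$ is a transversal of $G_0$ in $G$, and $G=\bigsqcup_{l=0}^{p^r-1}G_0x^l$. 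Composing the two decompositions gives the claimed unique factorization $g=z'y^kwx^l$. I would note that the only subtlety is uniqueness, which follows by successively projecting onto $G/G_0$ (fixing $l$), then onto $G_0/A$ (fixing $w$), then within $A$ (fixing $k$ and $z'$).

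\textbf{Proof of part (b).} Since the K-triple is split, $G$ is the inner semidirect product $G=G_0\rtimes\langle x\rangle$ with $\langle x\rangle\cong\mathbb{Z}_{p^r}$, so I can invoke the explicit ``Realization III for Semidirect Products'' of induced representations from page 79 of \cite{MR3012851}, exactly as in the Lie-group case: the representation space is $l^2(\mathbb{Z}_{p^r},\mathcal{H}_\tau)=l^2(G/G_0,\mathcal{H}_\tau)$, and for $g_0\in G_0$, $x^l\in\langle x\rangle$ one has
\[
\left[\pi(g_0x^l)\varphi\right](j)=\tau\!\left(x^{-j}g_0x^j\right)\varphi(j-l)~.
\]
Now I specialize $g_0=y^kw$ with $w\in W$ and compute $x^{-j}(y^kw)x^j=(x^{-j}y^kx^j)(x^{-j}wx^j)$. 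Using $y\in Z_1(G)$ and Lemma \ref{lem:bihom}, $x^{-j}yx^j=[x^j,y^{-1}]\,y=[x^j,y]^{-1}y=z^{-j}y$, so inductively $x^{-j}y^kx^j=z^{-kj}y^k$ (the central factor $z^{-j}$ commutes with everything). Hence $x^{-j}(y^kw)x^j=z^{-kj}y^k(x^{-j}wx^j)$, and since $\tau|_{Z(G)}$ is a multiple of the character sending $z\mapsto\xi$ with $\xi$ a primitive $p^r$-th root of unity (this is established inside the proof of Lemma \ref{lem:exists_K_triple}, using Lemma \ref{lem:ind_char} and faithfulness of $\pi$), and $\tau(y^k)=\xi_1^k\,\mathrm{id}$ on... — here I need to be careful: $\tau$ need not be one-dimensional, but $\tau|_A$ is a multiple of the character $\chi$, so $\tau(z')=\chi(z')\,\mathrm{id}$ and $\tau(y)=\chi(y)\,\mathrm{id}$; writing $\xi_1=\chi(y)$ and $\xi_2=\chi(z)=\xi$, I get $\tau(z^{-kj}y^k)=\xi_2^{-kj}\xi_1^k\,\mathrm{id}$. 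Pulling the scalar out yields
\[
\left[\pi(y^kwx^l)\varphi\right](j)=\xi_1^k\,\xi_2^{-kj}\,\tau\!\left(x^{-j}wx^j\right)\varphi(j-l)~,
\]
which is the asserted formula, with $\xi_2$ primitive of order $p^r$ as noted.

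\textbf{Main obstacle.} The routine parts are the two uniqueness arguments in (a) and the bookkeeping in (b); the one genuine point requiring care is that I must invoke the splitting hypothesis $x^{p^r}=e$ at exactly the right place — it is what makes $\langle x\rangle$ an honest subgroup (not just a cyclic quotient) and hence licenses the semidirect-product realization of the induced representation; without it the cocycle in Mackey's realization need not be trivial and the clean translation/modulation-type formula fails. I would flag explicitly that this is the sole reason the split hypothesis appears in the statement, and that it is automatic in the Lie setting (simply connected nilpotent Lie groups are exponential, so $\exp(\mathbb{R}X)$ is always a closed subgroup complementing $G_0$), which is why no analogous assumption was needed in Theorem \ref{thm:main_1}.
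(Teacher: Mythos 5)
Your proposal is correct and follows essentially the same route as the paper: part (a) by composing the transversal decompositions $Z(G)\backslash A$, $A\backslash G_0$, $G_0\backslash G$, and part (b) by invoking the semidirect-product realization of ${\rm ind}_{G_0}^G\tau$ from Folland and then extracting the scalars $\chi(y^k)\chi(z^{-kj})$ via the identity $x^{-j}y^kx^j=y^kz^{-kj}$ and the fact that $\tau|_A$ is a multiple of a character that is primitive on $\langle z\rangle$ by faithfulness. Your added detail on part (a) and the remark on where the splitting hypothesis is used merely flesh out what the paper leaves implicit.
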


\begin{proof}
Part (a) follows from properties (i)-(iii), and the choice of $W$.

For part (b), we use the semidirect product structure $G = G_0 \rtimes \langle x \rangle$, and again appeal to the realization of induced representations for semidirect products, elaborated on \cite[Page 79]{MR3012851}. We identify the quotient $G/G_0$ with the cyclic group $\mathbb{Z}_{p^r}$, using the identification $x^j G_0 \mapsto j$. 
Letting $w\in W$ and $\varphi \in l^{2}\left(  \mathbb{Z}_{p^r},\mathcal{H}_{\tau}\right)  $, we get by the cited result that 
\[
\left[  \pi\left(  y^{k}wx^{l}\right)  \varphi\right]  \left(  j \right)
=\tau\left(  x^{-j} \left( y^{k}w \right) x^{j}\right)  \varphi\left(
{j-l}\right)
\]
for $\left(  k,w,l\right)  \in \mathbb{Z}_{p^r}  \times
W\times  \mathbb{Z}_{p^r}$, $j \in \mathbb{Z}_{p^r}$. Since $\left[  x,y\right]  =z,$ we get
$x^{-j}  y^{k}   x^{j}=y^{k}z^{-kj}$ and
\begin{eqnarray} \nonumber
\pi\left(  y^{k}wx^{l}\right)  \varphi\left(  x^{j}\right)   &  = & \tau\left(
x^{-j} y^{k}  x^{j}\right)  \circ\tau\left(  x^{-j}
w   x^{j}\right)  \varphi\left(  j-l\right)  \\  \nonumber
&  = & \chi\left(  y^{k}z^{-kj}\right)  \cdot\tau\left(  x^{-j}\left(  w\right)
x^{j}\right)  \varphi\left(  j-l\right)  \\ \label{eqn:real_tau}
&  = & \chi\left(  y^{k}\right)  \cdot\chi\left(  z^{-kj}\right)  \cdot
\tau\left(  x^{-j}  w  x^{j}\right)  \varphi\left(
j-l \right)  .
\end{eqnarray}
Here $\chi : A \to \mathbb{T}$ denotes the character obtained by restricting the irreducible representation $\tau$ to the central subgroup $A$. Since $\pi$ is faithful, Lemma \ref{lem:ind_char} implies that $\tau$ is faithful on $Z(G)$. Hence, as $z$ has order $p^r$, there exists a primitive root of unity $\xi_2$ of order $p^r$ such that
\[
\tau(z^l) = \xi_2^{l}~.
\] Furthermore $\chi(y^k) = \xi_1^k$ for a suitable root of unity, whose order is at most the order of $y$. 
\end{proof}

\begin{lemma} \label{lem:conj_action}
Let $G$ denote a $p$-group, and $\pi$ an irreducible, faithful representation of $G$, with K-triple $(G_0,r,\tau)$. Let $W \subset G_0$ denote a system of representatives modulo $\langle \left\{y\right\}\cup Z(G) \rangle$, where $y\in Z_1(G)$ denotes the group element associated to the triple according to Definition \ref{defn:K-triple}. Then there exist mappings $\alpha_j : W \to W$ and $\beta_j : W \to \langle \left\{y\right\}\cup Z(G) \rangle$, $j=0,\ldots,p^r-1$ such that 
\[
\forall j = 0,\ldots,p^{r}-1,~\forall w \in W ~:~ x^{-j}wx^j = \beta_j(w) \alpha_j(w)~. 
\]
\end{lemma}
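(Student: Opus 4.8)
The plan is to observe that this lemma is essentially a bookkeeping consequence of two facts already available from the K-triple structure: first, $G_0$ is normal in $G$ (Definition \ref{defn:K-triple}(ii)–(iii), or equivalently Lemma \ref{Kirillov}(c)), so that conjugation by powers of $x$ preserves $G_0$; and second, $A := \langle \{y\} \cup Z(G)\rangle$ is a subgroup of $G_0$, central in $G_0$, so that the cosets $Aw$, $w \in W$, partition $G_0$, with each coset hit exactly once by $W$ (that is precisely what ``system of representatives modulo $\langle \{y\}\cup Z(G)\rangle$'' means, and since $A$ is central in $G_0$ left and right cosets coincide, so the side convention is irrelevant).

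The key steps, carried out for each fixed $j \in \{0,\ldots,p^r-1\}$, are as follows. The map $c_j : G \to G$, $c_j(g) = x^{-j} g x^j$, is an automorphism of $G$; since $G_0 \lhd G$, it restricts to an automorphism of $G_0$. Hence for every $w \in W$ the element $x^{-j} w x^j$ lies in $G_0$, and therefore lies in exactly one of the cosets $Aw'$, $w' \in W$. Define $\alpha_j(w)$ to be this unique $w'$, and set $\beta_j(w) = (x^{-j} w x^j)\,\alpha_j(w)^{-1}$. By construction $\beta_j(w) \in A = \langle \{y\}\cup Z(G)\rangle$ and $x^{-j} w x^j = \beta_j(w)\,\alpha_j(w)$, which is the asserted factorization. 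Well-definedness of $\alpha_j$ and $\beta_j$ is immediate from the fact that the coset $Aw'$ containing $x^{-j}wx^j$ is unique.

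There is no genuine obstacle here; the only point requiring a moment's care is making sure one invokes the right structural facts, namely the normality of $G_0$ and the transversal property of $W$, and checking that $\beta_j(w)$ indeed lands in $A$ — which is forced by $\alpha_j(w) \in W \subset G_0$ and $x^{-j}wx^j \in G_0$ lying in the same $A$-coset. If desired, one can additionally note that $c_j$ being an automorphism with $c_{-j} = c_j^{-1}$ makes each $\alpha_j$ a bijection of $W$ (with $\alpha_j^{-1}$ obtained from $c_{-j}$ in the same way), though the statement as formulated does not require this.
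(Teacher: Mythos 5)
Your proof is correct and is essentially the paper's argument: the paper simply notes that conjugation by $x^j$ preserves the normal subgroups $G_0$ and $A=\langle\{y\}\cup Z(G)\rangle$ and hence induces an automorphism of $G_0/A$, which is your coset-representative bookkeeping phrased at the quotient level. One small caveat on your closing aside: to get that $\alpha_j$ is a bijection (which the paper does use later, in Lemma \ref{lem:pr_inductive}(b)) you need not just that $c_j$ is an automorphism of $G_0$ but also that $A$ is invariant under conjugation by $x$ — which holds since $A$ is normal in $G$ by Lemma \ref{Kirillov}(b) — though the statement as written does not require this.
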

\begin{proof} 
Since $\langle y,z \rangle$ is normal in $G$, it is invariant under conjugation with $x^j$. Hence this conjugation induces an automorphism of the quotient group $G_0/\langle \left\{y\right\}\cup Z(G) \rangle$, which in turn yields the factorization stated in the lemma.  
\end{proof}

We can now formulate a sufficient criterion for $\pi$ to do phase retrieval, based on suitable assumptions on the associated K-triple. The reasoning is rather similar to  Step 5 in the proof of Theorem \ref{thm:main_1}.

\begin{lemma} \label{lem:pr_inductive}
Let $G$ denote a $p$-group, and $\pi$ an irreducible, faithful representation of $G$, with a split K-triple $(G_0,r,\tau)$. Assume that $\tau$ does phase retrieval, and that $\pi$ is given by the realization from Lemma \ref{lem:ind_rep_explicit}.  
\begin{enumerate}
    \item[(a)] Assume that $p_0(\tau) < 1/2$. If $\eta_1 \in l^2(\mathbb{Z}_{p^r})$ has the phase retrieval property for the Schr\"odinger representation, and $\eta_2 \in \mathcal{H}_\tau$ has the phase retrieval property for $\tau$, and additionally $p_0(\tau,\eta_2) <1/2$, the vector 
    \[
    \eta \in l^{2}\left(  \mathbb{Z}_{p^r},\mathcal{H}_\tau \right)~,~\eta(j) = \eta_1(j) \cdot \eta_2
    \] does phase retrieval property for $\pi$. Such vectors exist. 
    \item[(b)] Pick any system $W \subset G_0$ of representatives modulo $\langle \{ y \} \cup Z(G) \rangle$, where $x,y \in G$ are associated to the K-triple according to Definition \ref{defn:K-triple}. Assume that
    \[
    \forall w \in W~:~ \alpha_1(w) = w
    \] holds, where $\alpha_1:W \to W$ is the bijection from Lemma \ref{lem:conj_action}. If $\eta_1 \in l^2(\mathbb{Z}_{p^r})$ has the phase retrieval property for the Schr\"odinger representation, and $\eta_2 \in \mathcal{H}_\tau$ has the phase retrieval property for $\tau$, the vector
        \[
    \eta \in l^{2}\left(  \mathbb{Z}_{p^r},\mathcal{H}_\tau \right)~,~\eta(j) = \eta_1(j) \cdot \eta_2
    \] has the phase retrieval property for $\pi$. 
\end{enumerate}
\end{lemma}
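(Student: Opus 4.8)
The plan is to mimic Step 5 of the proof of Theorem \ref{thm:main_1}, replacing the integral over $\mathbb{R}$ by the finite sum over $\mathbb{Z}_{p^r}$ and the full support property by the quantitative bound $p_0(\tau,\eta_2) < 1/2$. Fix nonzero $f,h \in l^2(\mathbb{Z}_{p^r},\mathcal{H}_\tau)$ with $|V_\eta f| = |V_\eta h|$. Using Remark \ref{rem:p_0_proj_kernel}, we may ignore the central variable and work with the representatives $y^k w x^l$, $k,l \in \mathbb{Z}_{p^r}$, $w \in W$. From the realization in Lemma \ref{lem:ind_rep_explicit}(b), a direct computation (as in Step 4 of Theorem \ref{thm:main_1}) expresses $V_\eta f(k,w,l)$ as $\left(V_{\eta_1}^\varrho F(w,\cdot)\right)(k,l)$, where $\varrho$ is the finite Schr\"odinger representation on $l^2(\mathbb{Z}_{p^r})$ and
\[
F(w,j) = \langle f(j), \tau(x^{-j} w x^j) \eta_2 \rangle = \left(V_{\eta_2}^\tau f(j)\right)(x^{-j} w x^j)~,
\]
with $H$ defined analogously from $h$. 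Since $\eta_1$ does phase retrieval for $\varrho$, for each $w \in W$ there is a phase $u(w) \in \mathbb{T}$ with $H(w,\cdot) = u(w) F(w,\cdot)$; hence $|H(\cdot,j)| = |F(\cdot,j)|$ for every $j$. Because $x^{-j}(\cdot)x^j$ is a bijection of $G_0$ preserving the cosets of $\langle\{y\}\cup Z(G)\rangle$, and $\tau$ restricted to that central subgroup is a character, this gives $|V_{\eta_2}^\tau f(j)| \equiv |V_{\eta_2}^\tau h(j)|$ on all of $G_0$ for each $j$. The phase retrieval property of $\eta_2$ then yields $v : \mathbb{Z}_{p^r} \to \mathbb{T}$ with $h(j) = v(j) f(j)$, so the task reduces to showing $v$ is constant on $C_f = \{ j : f(j) \ne 0 \}$.

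For part (a), this is where $p_0(\tau,\eta_2) < 1/2$ enters, playing the role of the full support property. Fix $j_1, j_2 \in C_f$. By definition of $p_0(\tau,\eta_2)$, for $i = 1,2$ the set
\[
W_f(j_i) = \{ w \in W : F(w,j_i) \ne 0 \} = \{ w \in W : (V_{\eta_2}^\tau f(j_i))(x^{-j_i} w x^{j_i}) \ne 0 \}
\]
has, as a subset of a system of representatives modulo the projective kernel of $\tau$, relative size strictly greater than $1 - p_0(\tau,\eta_2) > 1/2$ (using Remark \ref{rem:p_0_proj_kernel} and that conjugation by $x^{j_i}$ permutes cosets). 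Two subsets of $W$ each of relative size $>1/2$ must intersect, so there is $w \in W_f(j_1) \cap W_f(j_2)$. From $v(j_i) F(w,j_i) = H(w,j_i) = u(w) F(w,j_i)$ and $F(w,j_i) \ne 0$ we cancel to get $v(j_1) = u(w) = v(j_2)$. Hence $v$ is constant on $C_f$, and combined with $h(j) = v(j) f(j)$ we conclude $h \in \mathbb{T} f$, i.e. $\eta$ does phase retrieval. Existence of suitable $\eta_1, \eta_2$ follows from Example \ref{ex:finite_Heisenberg} (nonvanishing ambiguity function, which forces $p_0(\varrho,\eta_1)$ small and phase retrieval) together with Lemma \ref{lem:Zariski}: the set of $\eta_2$ doing phase retrieval for $\tau$ and attaining $p_0(\tau,\eta_2) = p_0(\tau) < 1/2$ is a nonempty real Zariski open set.

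For part (b), the hypothesis $\alpha_1(w) = w$ for all $w \in W$ is used to replace the counting argument by an exact algebraic identity. The point is that $x^{-j} w x^j$ differs from $w$ only by a factor in $\langle\{y\}\cup Z(G)\rangle$ for every $j$, since $\alpha_j = \alpha_1^j = \mathrm{id}$; as $\tau$ restricted to that subgroup is a character, $F(w,j)$ and the "untwisted" coefficient $(V_{\eta_2}^\tau f(j))(w)$ differ only by a $j$-dependent unimodular scalar $\beta_j(w)$-factor that does not depend on $f$ or $h$. Consequently $F(w,j) \ne 0$ iff $(V_{\eta_2}^\tau f(j))(w) \ne 0$, and for fixed $j \in C_f$, $f(j) \ne 0$ guarantees $V_{\eta_2}^\tau f(j)$ is not identically zero, so $W_f(j) \ne \emptyset$; moreover for $j_1, j_2 \in C_f$ Lemma \ref{lem:basics_pr}(b) applied to the nonzero vectors $f(j_1), f(j_2)$ and the system $(\pi_\tau(g)\eta_2)_g$ (restricted to $W$) produces a single $w$ with $(V_{\eta_2}^\tau f(j_1))(w) \cdot (V_{\eta_2}^\tau f(j_2))(w) \ne 0$, hence $w \in W_f(j_1) \cap W_f(j_2)$. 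The same cancellation as in part (a) gives $v(j_1) = v(j_2)$, finishing the proof.

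The main obstacle is the verification that $W_f(j)$ is genuinely a subset of a system of representatives modulo the projective kernel of $\tau$ of the claimed relative size — i.e. correctly accounting for the conjugation action of $x^j$ on cosets and invoking Remark \ref{rem:p_0_proj_kernel} in the right form — and keeping careful track of the various unimodular scalars ($\xi_1^k$, $\xi_2^{-kj}$, the character on $\langle\{y\}\cup Z(G)\rangle$) so that they all cancel cleanly in the final step; the two-sets-of-density-$>1/2$-intersect argument itself is elementary once the setup is correct.
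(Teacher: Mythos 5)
Your proposal is correct and follows essentially the same route as the paper's proof: the same factorization of $V_\eta f$ through $F(w,j)=V_{\eta_2}^\tau(f(j))(x^{-j}wx^j)$ and a modified Schr\"odinger coefficient, the same reduction to constancy of $v$ on $C_f$, the pigeonhole argument from $p_0(\tau,\eta_2)<1/2$ in part (a), Lemma \ref{lem:basics_pr}(b) in part (b), and Lemma \ref{lem:Zariski} for existence. No gaps.
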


\begin{proof}
We first derive formulae for matrix coefficients, which are valid for both cases under consideration. Given $\eta = \eta_1 \cdot \eta_2$ with $\eta_1 \in l^2(\mathbb{Z}_{p^r})$ and $\eta_2 \in \mathcal{H}_\tau$, the matrix coefficient of $f \in l^2(\mathbb{Z}_{p^r}, \mathcal{H}_\tau)$ is computed in the following two-step procedure: Using Lemma \ref{lem:ind_rep_explicit} for all $k,l =0,\ldots,p^{r}-1$, with suitable choices of $x,y,z \in G$ and $W \subset G_0$ as in the cited lemma, we have for all $w \in W$ and all $k,l =0,\ldots,p^{r}-1,$
\begin{equation} \label{eqn:matrix_coeff_finite}
V_\eta f(y^kwx^l) = \sum_{j=0}^{p^r-1} \langle f(j) , \tau(x^{-j} w x^j) \eta_2 \rangle \xi_1^{-k} \xi_2^{kj} \overline{\eta_1(j-l)}~.  
\end{equation}

This can be rephrased as the result of consecutive computations of matrix coefficients with respect to $\tau$ and a (slightly modified version of) the Schr\"odinger representation. For this purpose, we let 
\begin{equation} \label{eqn:def_F_finite} 
F : W \times \mathbb{Z}_{p^r} \to \mathbb{C}~, F(w,j) = \langle f(j) , \tau(x^{-j} w x^j) \eta_2 \rangle
\end{equation} and note that 
\begin{equation} \label{eqn:F_mc_finite}
F(w,j) = V_{\eta_2}^\tau (f(j)) (x^{-j}wx^j)~.
\end{equation}
Then equation (\ref{eqn:matrix_coeff_finite}) can be rewritten as 
\begin{equation} \label{eqn:V_eta_rho_finite}
V_\eta f(y^kwx^l) = V_{\eta_1}^\varrho (F(w,\cdot)) (k,l)
\end{equation}
where 
\[
V_{\eta_1}^\varrho g (k,l)  =  \sum_{j=0}^{p^r-1} g(j) \xi_1^{-k} \xi_2^{kj} \overline{\eta_1(j-l)}~.
\] This is just a matrix coefficient with respect to the Schr\"odinger representation from Example \ref{ex:finite_Heisenberg}, modified by multiplication with $\xi_1^{-k}$, and by replacing the powers of the standard primitive $p^r$th root of unity, given by  $e^{2 \pi i/p^r}$, by powers of $\xi_2$, which is a primitive root of order $p^r$ as well, by Lemma \ref{lem:ind_rep_explicit}(b). Hence, $V_{\eta_1}^\varrho g$ coincides with the windowed Fourier transform of $g$ with respect to $\eta_1$ up to phase factors (independent of $g$) and a permutation of coefficients. By Lemma \ref{lem:basics_pr} (c), these operations do not affect the phase retrieval property, hence our choice of $\eta_1$ guarantees the phase retrieval property for $V_{\eta_1}^\varrho$ as well. 

Now assume that $f,h \in l^2(\mathbb{Z}_p^r,\mathcal{H}_\tau)$ are such that $|V_\eta f| \equiv |V_\eta g|$ holds. We use $F$ from (\ref{eqn:def_F_finite}) and define $H$  analogously as 
\begin{equation}
H: W  \times \mathbb{Z}_{p^r} \to \mathbb{C}~, H(w,j) = \langle h(j) , \tau(x^{-j} w x^j) \eta_2 \rangle
\end{equation} 
equation (\ref{eqn:V_eta_rho_finite}) gives for all $w \in W$
\[
 \left| V_{\eta_1}^\varrho (F(w,\cdot)) \right| =  \left| V_{\eta_1}^\varrho (H(w,\cdot)) \right| ~.
\] Since $\eta_1$ does phase retrieval, we obtain the existence of a map 
\[
u: W \to \mathbb{T}~,~ H(w,\cdot) = u(w)F(w,\cdot)~,~ \forall w \in W~. 
\] This implies for all $j \in \mathbb{Z}_{p^r}$ that
\[
|H(\cdot,j)| \equiv |F(\cdot,j)|~.
\] $W \subset G_0$ is a system of representatives modulo the central subgroup $\langle \{ y \} \cup Z(G) \rangle$ of $G_0$, and the latter is contained in the projective kernel of $\tau$. Hence equation (\ref{eqn:F_mc_finite}) (and its analog for $H$) allows to rewrite the previous equation as follows, for all $j \in \mathbb{Z}_{p^r}$:  
\[
\forall g_0 \in G_0 ~:~ \left| V_{\eta_2}^\tau (f(j)) (x^{-j} g_0 x^j) \right| = \left| V_{\eta_2}^\tau (h(j)) (x^{-j} g_0 x^j) \right|~.
\] For fixed $j \in \mathbb{Z}_{p^r}$, conjugation by $x^j$ is an automorphism of $G_0$, hence we may simplify this equation to 
\[
\left| V_{\eta_2}^\tau (f(j)) \right| = \left| V_{\eta_2}^\tau (h(j)) \right|~.
\] Now the phase retrieval property of $\eta_2$ implies the existence of a map 
\begin{equation} \label{eqn:role_v}
v : \mathbb{Z}_{p^r} \to \mathbb{T}~,\forall j \in \mathbb{Z}_{p^r}:~h(j) = v(j) f(j)~.
\end{equation} Note that this also entails, for all $j$, that 
\[
H(\cdot,j) \equiv v(j) F(\cdot,j)~.
\]
Defining the set
\begin{equation} \label{eqn:def_Cf}
    C_f = \{ j \in \mathbb{Z}_{p^r}~:~ f(j) \not= 0 \} ~,
\end{equation} (\ref{eqn:role_v}) shows that the proof of phase retrieval boils down to showing that $v$ is constant on $C_f$. This is achieved if we can show that for all 
$j,j' \in C_f$ there exists $w \in W$ such that 
\[
F(w,j) \cdot F(w,j') \not= 0 ~,
\] since then the reasoning finishing step 5 in the proof of Theorem \ref{thm:main_1} can be easily adapted to the finite setting: We obtain that 
\[
v(j) F(w,j) = H(w,j) = u(w) F(w,j)~,~ v(j') F(w,j') = u(w) F(w,j')
\] and $F(w,j), F(w,j')$ can be cancelled to give $v(j) = u(w) = v(j')$.

It is only at this stage that we need to distinguish the parts (a) and (b) of the lemma.
In the case of part (a), we recall that $F(\cdot,j)$ is the restriction to $W$ of a matrix coefficient with respect to the action of the representation $\tau$ on $\eta_1$. By choice of $W$ and $\eta_1$, and in view of Remark \ref{rem:p_0_proj_kernel}, the proportions of zero entries in $F(\cdot,j)$ and $F(\cdot,j')$ is therefore $<1/2$, for $j,j' \in C_f$. Hence the pidgeonhole principle yields the desired $w \in W$ with $F(w,j)F(w,j') \not=0$.

In case (b) we have $\alpha_j(w) = w$ for all $j \in \mathbb{Z}_{p^r}$. Here all $F(\cdot,j)$ are restrictions of matrix coefficients to $W$, with respect to the action of $\tau$ on $\eta_2$. Now the phase retrieval property of $V_{\eta_2}^\tau$ entails via Lemma \ref{lem:basics_pr} (b) that $F(\cdot,j) \cdot F(\cdot,j') \not\equiv 0$ holds for $j,j' \in C_f$.

We close the proof by observing that in case (a), the existence of a vector $\eta_2$ fulfilling the assumptions of (a) is guaranteed by Lemma \ref{lem:Zariski}.
\end{proof}

%
%
%
%
%

The next definition extends the bookkeeping device provided by K-triples, in order to account for repeated applications of Lemma \ref{lem:pr_inductive}.

\begin{definition}
Let $G$ be a $p$-group, and $\pi$ an irreducible representation. A \textbf{full sequence of K-triples associated to $\mathbf{\pi}$} is a sequence of triples $(H_i,r_i,\tau_i)$ with the following properties: Let $\tau_0 = \pi$, $K_0 = {\rm ker}(\pi_0), H_0 = G/K_0$, as well as $K_i = {\rm ker}(\tau_i)$ for $i=1,\ldots,k$. Then, for all $0 \le i < k$, $ (H_{i+1},r_{i+1},\tau_{i+1})$ is a K-triple for the representation $\overline{\tau}_i$ of $H_i/K_i$ induced by $\tau_i$, and $H_k/K_k$ is abelian. 
\end{definition}

\begin{remark} \label{rem:fs_Kt}
Repeated application of Lemma \ref{lem:exists_K_triple} shows that every irreducible representation $\pi$ has an associated full sequence of K-triples. 

The sizes of the quotient groups are related by the estimate
\[
|H_{i+1}/K_{i+1}| \le |H_{i+1}| = p^{-r_{i+1}} |H_i/K_i|~,
\] which results from the fact that $H_{i+1}< H_i/K_i$ has index $p^{r_{i+1}}$.
We further note that $H_k$ must be nontrivial: Recall that $\overline{\tau}_{k-1} \simeq {\rm ind}_{H_k}^{H_{k-1}/K_{k-1}} \tau_k$ is required to be irreducible. In the case of a trivial subgroup $H_k$, this representation is just the left regular representation of the (nontrivial) group $H_{k-1}/K_{k-1}$, which is never irreducible.

Hence $H_k$ has cardinality at least $p$. 
As a consequence we get that the length $k$ of a full sequence of K-triples fulfills 
\begin{equation} \label{eqn:est_length}
1+k \le \log_p|G|~.
\end{equation}
\end{remark}

Repeated applications of Lemma \ref{lem:pr_inductive} requires control over properties of the associated matrix coefficients, as quantified in $p_0(\pi)$. Since the quantity $p_0(\pi)$ is of independent interest
(see Remark \ref{rem:p_0}), the following lemma is as well. 
\begin{lemma} \label{lem:est_p0}
Let $G$ denote a nonabelian $p$-group, and $\pi$ a faithful, irreducible representation of $G$. Let $(G_0,r,\tau)$ denote a split K-triple associated to $\pi$. Then we have the estimate
\[
p_0(\pi) \le p_0(\tau) + \frac{p^r-1}{p^{2r}}~.
\]
\end{lemma}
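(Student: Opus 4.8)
The plan is to produce an explicit vector $\eta \in l^2(\mathbb{Z}_{p^r},\mathcal{H}_\tau)$ of the product form $\eta(j) = \eta_1(j)\,\eta_2$ used in Lemma \ref{lem:pr_inductive}, and to estimate $p_0(\pi,\eta)$ directly; since $p_0(\pi) \le p_0(\pi,\eta)$ by definition, this suffices. Here I would pick $\eta_2 \in \mathcal{H}_\tau$ attaining the minimum, i.e. $p_0(\tau,\eta_2) = p_0(\tau)$ (such $\eta_2$ exists since the values $p_0(\tau,\cdot)$ form a finite set, or by Lemma \ref{lem:Zariski}), and $\eta_1 \in l^2(\mathbb{Z}_{p^r})$ attaining $p_0(\varrho,\eta_1) = p_0(\varrho) = \tfrac{p^r-1}{p^{2r}}$ for the Schr\"odinger representation $\varrho$ of $\mathbb{H}_{p^r}$ from Example \ref{ex:finite_Heisenberg}, using \eqref{eqn:p_0_Schroedinger} with $n = p^r$.

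First I would recall the matrix coefficient formulas from the proof of Lemma \ref{lem:pr_inductive}: with $F(w,j) = V_{\eta_2}^\tau(f(j))(x^{-j}wx^j)$ as in \eqref{eqn:def_F_finite}--\eqref{eqn:F_mc_finite}, equation \eqref{eqn:V_eta_rho_finite} gives $V_\eta f(y^kwx^l) = V_{\eta_1}^\varrho(F(w,\cdot))(k,l)$, where $V_{\eta_1}^\varrho$ coincides with the windowed Fourier transform with window $\eta_1$ up to fixed phase factors and a permutation of coefficients. By Lemma \ref{lem:ind_rep_explicit}(a), the set $\{\,y^kwx^l : 0\le k,l<p^r,\ w\in W\,\}$, with $W$ a transversal of $G_0$ modulo $A=\langle\{y\}\cup Z(G)\rangle$, is a system of representatives for $G$ modulo $Z(G)\subset K_\pi$; hence by Remark \ref{rem:p_0_proj_kernel} the computation of $p_0(\pi,\eta)$ reduces to counting, for each nonzero $f$, the triples $(k,w,l)$ with $V_\eta f(y^kwx^l)=0$, out of a total of $p^{2r}|W|$.

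Next, for fixed $0\ne f$ I would split $W$ into $Z_f = \{w\in W : F(w,\cdot)\equiv 0\}$ and its complement. If $w\notin Z_f$, then $F(w,\cdot)$ is a nonzero element of $l^2(\mathbb{Z}_{p^r})$, so the comparison of $V_{\eta_1}^\varrho$ with $\varrho$ and the choice of $\eta_1$ bound the number of $(k,l)$ with $V_\eta f(y^kwx^l)=0$ by $p^r-1$; if $w\in Z_f$, all $p^{2r}$ values vanish. To control $|Z_f|$, pick $j_0$ with $f(j_0)\ne 0$; then $Z_f \subseteq \{w\in W : V_{\eta_2}^\tau(f(j_0))(x^{-j_0}wx^{j_0}) = 0\}$. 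Since conjugation by $x^{j_0}$ is an automorphism of $G_0$ preserving the normal subgroup $A$, which lies in the projective kernel of $\tau$, the set $\{x^{-j_0}wx^{j_0}: w\in W\}$ is again a transversal of $G_0$ modulo $A$; applying Remark \ref{rem:p_0_proj_kernel} to $\tau$ and the nonzero vector $g = f(j_0)$ therefore yields $|Z_f| \le p_0(\tau,\eta_2)\,|W| = p_0(\tau)\,|W|$.

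Combining these bounds, the number of vanishing coefficients of $V_\eta f$ over the chosen transversal is at most $|Z_f|\,p^{2r} + (|W|-|Z_f|)(p^r-1)$, so the proportion of zeros is at most $\frac{|Z_f|}{|W|} + \frac{(|W|-|Z_f|)(p^r-1)}{p^{2r}|W|} \le p_0(\tau) + \frac{p^r-1}{p^{2r}}$; taking the maximum over $f$ gives $p_0(\pi) \le p_0(\pi,\eta) \le p_0(\tau) + \frac{p^r-1}{p^{2r}}$. The only genuinely delicate point is the estimate on $|Z_f|$: one must verify that conjugation by $x^{j_0}$ carries $W$ to a valid transversal modulo $A$ so that Remark \ref{rem:p_0_proj_kernel} applies to $\tau$, and must use a single fixed $j_0$ with $f(j_0)\ne 0$ rather than attempting to exploit all coordinates simultaneously. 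Everything else is bookkeeping with the formulas already established in the proof of Lemma \ref{lem:pr_inductive}.
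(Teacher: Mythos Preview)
Your proposal is correct and follows essentially the same approach as the paper: choose $\eta=\eta_1\cdot\eta_2$ with $\eta_1$ realizing $p_0$ for the Schr\"odinger representation and $\eta_2$ realizing $p_0(\tau)$, split $W$ according to whether $F(w,\cdot)\equiv 0$, bound the Schr\"odinger zeros on the nonvanishing part, and control $|Z_f|$ via a single index $j_0$ with $f(j_0)\ne 0$. Your remark that conjugation by $x^{j_0}$ preserves the normal subgroup $A$ and hence sends $W$ to another transversal is exactly the observation the paper uses (more tersely) to justify the bound $|Z_f|\le p_0(\tau)\,|W|$.
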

\begin{proof}
We use the realization of $\pi$ from Lemma \ref{lem:ind_rep_explicit}. Fix a system $W \subset G_0$ of representatives modulo $\langle \{ y \} \cup Z(G) \rangle$. Given a nonzero $f \in l^2(\mathbb{Z}_{p^r},\mathcal{H}_\tau)$, we intend to estimate the proportions of zeros in the family $(V_\eta f (y^kwx^l))_{k,l=0,\ldots,p^r-1,w \in W}$. Since this is the restriction of $V_\eta f$ to a system of representatives modulo the center of $G$, the proportion of zeros of $V_\eta f$ coincides with this quantity.

We pick $\eta_1\in l^2(\mathbb{Z}_{p^r})$ and $\eta_2 \in \mathcal{H}_\tau$, and let $\eta = \eta_1 \cdot \eta_2 \in l^2(\mathbb{Z}_{p^r},\mathcal{H}_\tau)$. We recall some notations and observations from the proof of Lemma \ref{lem:pr_inductive}, specifically
\begin{equation} \label{eqn:def_F_finite_2} 
F : W \times \mathbb{Z}_{p^r} \to \mathbb{C}~, F(w,j) = \langle f(j) , \tau(x^{-j} w x^j) \eta_2 \rangle
\end{equation} and the related formulas 
\begin{equation} \label{eqn:F_mc_finite_2}
F(w,j) = V_{\eta_2}^\tau (f(j)) (x^{-j}wx^j)~,
\end{equation}
and 
\begin{equation} \label{eqn:V_eta_rho_finite_2}
V_\eta f(y^kwx^l) = V_{\eta_1}^\varrho (F(w,\cdot)) (k,l)~.
\end{equation}

We assume that $\eta_1,\eta_2$ are chosen such that 
\[
p_0(\sigma,\eta_1) = p_0(\sigma) = \frac{p^r-1}{p^{2r}} ~,~
p_0(\tau,\eta_2) = p_0(\tau)~,~
\] where $\sigma$ denotes the Schr\"odinger representation from Example \ref{ex:finite_Heisenberg}, and the second equality for $p_0(\sigma)$ follows by equation (\ref{eqn:p_0_Schroedinger}). Note that this quantity is also an upper bound for the proportion of zeros in $V_{\eta_1}^\varrho (g)$ for nonzero vector $g \in l^2(\mathbb{Z}_{p^r})$, by the same reasoning as in the proof of Lemma \ref{lem:pr_inductive}: The matrix coefficients $V_{\eta_1}^\varrho g$ and $V_{\eta_1}^\sigma g$ differ up to a multiplication with phase factors and a permutation, none of which affects the proportion of zeros. 

Let
\[
c_0 = \frac{ |\{ w \in W: F(w,\cdot) \equiv 0 \}|}{|W|}~.
\]
In view of (\ref{eqn:F_mc_finite_2}) and (\ref{eqn:V_eta_rho_finite_2}) we have that
\[
V_\eta^\pi f (y^kwx^l) = 0 
\] may occur for fixed $w \in W$ and suitable $k,l$ either if $F(w,\cdot) \equiv 0$, in which case it holds for all $k,l \in \mathbb{Z}_{p^r}$, or if $F(w,\cdot) \not\equiv 0$ holds. In the latter case, the proportion of zeros in $V_\eta^\pi  f(y^kwx^l)$ for fixed $w$ can be bounded using $p_0(\eta_2,\sigma)$. These observations lead to the overall estimate
\begin{eqnarray*}
\lefteqn{|\{ (k,w,l) : V_\eta^\pi f(y^kwx^l) = 0 \}|} \\ & \le & |\{ w \in W : F(w,\cdot) = 0 \}| \cdot p^{2r} + |\{ w \in W : F(w,\cdot) \not= 0 \}| \cdot p^{2r} p_0(\eta_2,\sigma) ~.
\end{eqnarray*}
Dividing both sides of this inequality by $|G/Z(G)| = |W|p^{2r}$ yields
\[
\frac{|\{ (k,w,l) : V_\eta^\pi f(y^kwx^l) = 0 \}|}{|W|p^{2r}} \le c_0 + (1-c_0)  \frac{p^r-1}{p^{2r}} \le c_0 + \frac{p^r-1}{p^{2r}}~.
\]

It therefore remains to prove
\begin{equation} \label{eqn:est_c0}
c_0 \le p_0(\eta_2,\tau) = p_0(\eta_2)~.
\end{equation}
To see this, pick any $j_0 \in \mathbb{Z}_{p^r}$ such that $f(j_0) \not= 0$; $j_0$ exists since we assume $f$ to be nonzero. 
We can then estimate 
\begin{eqnarray*}
|\{ w \in W : F(w,\cdot) \equiv 0 \} | & \le & |\{ w \in W: F(w,j_0) = 0 \}| \\
& = &  |\{ w \in W : V_{\eta_2}^\tau (f(j_0)) (x^{-j_0}wx^{j_0})  = 0 \}| \\
& \le & |W|\cdot p_0(\tau,\eta_2)~,
\end{eqnarray*}
which proves  (\ref{eqn:est_c0}), and finishes the proof of the lemma.
\end{proof}

A straightforward inductive application of Lemma \ref{lem:pr_inductive} now gives the following result, which formulates a sufficient criterion for phase retrieval that can be checked using a full sequence of K-triples.  
\begin{corollary} \label{cor:p_groups_general}
Let $G$ denote a $p$-group. Assume that $\pi$ is an irreducible representation of $G$ possessing a full sequence of K-triples $(H_i,r_i,\tau_i)$, for $i=1,\ldots,k$, that are all split. If 
\[
\sum_{i=2}^k \frac{p^{r_i}-1}{p^{2r_i}} < \frac{1}{2}~,
\] then $\pi$ does phase retrieval. 
\end{corollary}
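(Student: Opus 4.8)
The plan is to prove Corollary~\ref{cor:p_groups_general} by induction on the length $k$ of the full sequence of K-triples, applying Lemma~\ref{lem:pr_inductive}(a) at each step while tracking the quantity $p_0$ of the representations encountered along the chain. Concretely, set $\overline{\tau}_i$ to be the representation of $H_i/K_i$ induced by $\tau_i$, so that $\overline{\tau}_0$ is the faithful representation of $G/K_0$ associated to $\pi$; since phase retrieval is invariant under factoring out the kernel (Remark~\ref{rem:pr_factor_group}), it suffices to show that each $\overline{\tau}_i$ does phase retrieval, and by downward induction it is enough to pass from $\overline{\tau}_{i+1}$ to $\overline{\tau}_i$. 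The base case is $\overline{\tau}_k$, the representation of the \emph{abelian} group $H_k/K_k$, which is one-dimensional and trivially does phase retrieval.

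For the induction step, suppose $1 \le i < k$ and that $\overline{\tau}_{i+1}$ does phase retrieval. By hypothesis $(H_{i+1},r_{i+1},\tau_{i+1})$ is a split K-triple for $\overline{\tau}_i$, with $\overline{\tau}_i \simeq {\rm ind}_{H_{i+1}}^{H_i/K_i}\tau_{i+1}$, and $\tau_{i+1}$ is a representation of $H_{i+1}$ inducing $\overline{\tau}_{i+1}$ on $H_{i+1}/K_{i+1}$. In order to invoke Lemma~\ref{lem:pr_inductive}(a) I need two things: first, that $\tau_{i+1}$ does phase retrieval --- this follows from the inductive hypothesis on $\overline{\tau}_{i+1}$ together with Remark~\ref{rem:pr_factor_group}, since $\tau_{i+1}$ and $\overline{\tau}_{i+1}$ differ only by factoring out $K_{i+1}$; and second, the quantitative bound $p_0(\tau_{i+1}) < 1/2$. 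Granting these, Lemma~\ref{lem:pr_inductive}(a) produces a vector doing phase retrieval for $\overline{\tau}_i$, completing the step.

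The heart of the argument --- and the step I expect to be the main obstacle --- is controlling $p_0$ up the chain so that the hypothesis $p_0(\tau_{i+1}) < 1/2$ is available at every level. For this I would apply Lemma~\ref{lem:est_p0} repeatedly: since $(H_{j+1}, r_{j+1}, \tau_{j+1})$ is a split K-triple for $\overline{\tau}_j$, that lemma gives $p_0(\overline{\tau}_j) \le p_0(\tau_{j+1}) + \frac{p^{r_{j+1}}-1}{p^{2 r_{j+1}}}$, and since $p_0$ is insensitive to factoring out normal subgroups in the projective kernel (Remark~\ref{rem:p_0_proj_kernel}), $p_0(\tau_{j+1}) = p_0(\overline{\tau}_{j+1})$. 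Telescoping from the abelian bottom layer, where $p_0(\overline{\tau}_k) = 0$ (a one-dimensional representation has no zeros in its nonzero matrix coefficients), one obtains
\[
p_0(\overline{\tau}_i) \le \sum_{j=i+1}^{k} \frac{p^{r_j}-1}{p^{2 r_j}} \le \sum_{j=2}^{k} \frac{p^{r_j}-1}{p^{2 r_j}} < \frac{1}{2}
\]
for every $i \ge 1$, where the final strict inequality is exactly the hypothesis of the corollary. In particular $p_0(\tau_{i+1}) = p_0(\overline{\tau}_{i+1}) < 1/2$ for all $i \ge 1$, which is precisely what Lemma~\ref{lem:pr_inductive}(a) demands. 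Finishing the induction then yields that $\overline{\tau}_0$, and hence $\pi$, does phase retrieval.

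Two bookkeeping points deserve care and I would address them explicitly. First, the index bookkeeping: the sum in the corollary starts at $i=2$, reflecting that the topmost K-triple $(H_1,r_1,\tau_1)$ contributes the Schr\"odinger factor of size $p^{r_1}$ but its $p_0$-cost is not counted --- this is consistent because Lemma~\ref{lem:pr_inductive}(a) only needs $p_0$ of the \emph{inducing subgroup's} representation $\tau$, never of $\pi$ itself, so the estimate we need at the top level is $p_0(\tau_1) < 1/2$, and $\tau_1$'s cost is governed by the tail sum $\sum_{j=2}^k$ via the telescoping above. Second, one must confirm that faithfulness and the ``non-abelian'' hypotheses needed by Lemmas~\ref{lem:est_p0} and~\ref{lem:pr_inductive} hold at each stage: $\overline{\tau}_j$ is by construction a faithful representation of $H_j/K_j$, and it is non-abelian precisely when $j < k$ (the abelian case being the base of the induction), so both lemmas apply exactly on the range where they are invoked.
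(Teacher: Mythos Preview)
Your proposal is correct and follows essentially the same approach as the paper: a downward induction along the chain, using Lemma~\ref{lem:est_p0} to telescope the bound $p_0(\tau_j) \le \sum_{i=j+1}^k \frac{p^{r_i}-1}{p^{2r_i}} < \tfrac{1}{2}$, then invoking Lemma~\ref{lem:pr_inductive}(a) at each level. The paper's proof compresses this into two sentences, whereas you have (correctly) spelled out the passage between $\tau_j$ and $\overline{\tau}_j$ via Remarks~\ref{rem:pr_factor_group} and~\ref{rem:p_0_proj_kernel}, the abelian base case $p_0(\overline{\tau}_k)=0$, and the reason the sum begins at $i=2$; these are exactly the details the paper's proof leaves implicit.
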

\begin{proof}
A downwards induction establishes
\[
p_0(\tau_j) \le \sum_{i=j+1}^k \frac{p^{r_i}-1}{p^{2r_i}} < \frac{1}{2}~, 
\] for all $j=1,\ldots,k$. Hence Lemma \ref{lem:pr_inductive} (a) can be applied $k$ times, and yields the desired statement. 
\end{proof}

As an application of the corollary, we show that a simple size estimate on $|G|$ suffices to guarantee phase retrieval for all irreducible representations of $G$, if $G$ has exponent $p$.

\begin{corollary} \label{cor:small_p_groups}
Assume that the $p$-group $G$ has exponent $p$ and fulfills 
$|G| \le p^{2+p/2}$. Then every irreducible representation of $G$ does phase retrieval. 
\end{corollary}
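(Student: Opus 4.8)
The plan is to combine Corollary~\ref{cor:p_groups_general} with the structural rigidity forced by the hypothesis that $G$ has exponent $p$. First I would reduce, via Remark~\ref{rem:pr_factor_group}, to the case of a \emph{faithful} irreducible representation $\pi$ of $G$ (if $\pi$ has nontrivial kernel, replace $G$ by $G/\ker(\pi)$, which is again an exponent-$p$ group of order $\le p^{2+p/2}$). If $G$ is abelian the claim is trivial, so assume $G$ is nonabelian. By Lemma~\ref{lem:exists_K_triple} and Remark~\ref{rem:fs_Kt}, $\pi$ possesses a full sequence of K-triples $(H_i,r_i,\tau_i)$, $i=1,\ldots,k$. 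The crucial observation is that \emph{exponent $p$ is inherited by all the groups $H_i$}: each $H_i$ is a subquotient of $G$, so every element still has order dividing $p$, whence all the integers $r_i$ equal $1$ (recall $r_i$ is defined by $G_0$ having index $p^{r_i}$, equivalently $\langle z\rangle\cong\mathbb{Z}_{p^{r_i}}$, and an element of order $p^{r_i}$ inside an exponent-$p$ group forces $r_i=1$). Moreover the K-triples are automatically \emph{split}: the element $x$ appearing in Definition~\ref{defn:K-triple} already satisfies $x^{p}=x^{p^{r_i}}=e$ since $x$ lies in a group of exponent $p$; by the discussion following Definition~\ref{defn:K-triple} this is exactly the splitting condition.

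With all $r_i=1$, the sum in Corollary~\ref{cor:p_groups_general} becomes $\sum_{i=2}^k \frac{p-1}{p^{2}} = (k-1)\cdot\frac{p-1}{p^2}$, so it suffices to verify
\[
(k-1)\,\frac{p-1}{p^{2}} < \frac12~.
\]
To bound $k$, I would use the length estimate $1+k\le\log_p|G|$ from \eqref{eqn:est_length} together with $|G|\le p^{2+p/2}$, which gives $1+k\le 2+p/2$, i.e. $k\le 1+p/2$, hence $k-1\le p/2$. Substituting, $(k-1)\frac{p-1}{p^2}\le \frac{p}{2}\cdot\frac{p-1}{p^2} = \frac{p-1}{2p} < \frac12$. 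Thus the hypothesis of Corollary~\ref{cor:p_groups_general} is met, and $\pi$ does phase retrieval. Since $\pi$ was an arbitrary irreducible representation (after the faithfulness reduction), the corollary follows.

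The only genuinely delicate point is the claim that each member $H_i$ of a full sequence of K-triples is again a group of exponent $p$, and consequently that each $r_i=1$ and each K-triple is split; this is where the exponent-$p$ hypothesis does all the work, turning the conditional Corollary~\ref{cor:p_groups_general} into an unconditional statement. Everything else is the arithmetic above, which is routine. One should double-check the edge behavior of the length bound when $p=2$: there $k-1\le p/2 = 1$, so the sum is at most $\frac{p-1}{p^2}=\frac14<\frac12$, consistent; and for $p$ odd the bound $\frac{p-1}{2p}<\frac12$ is strict, so there is no boundary case to worry about. I would also remark in passing that exponent $p$ with $|G|\le p^{2+p/2}$ is a mild constraint only for small $p$, but that is a comment rather than part of the proof.
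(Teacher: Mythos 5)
Your proposal is correct and follows essentially the same route as the paper's own proof: exponent $p$ passes to all subquotients, which forces every K-triple in a full sequence to be split with $r_i=1$, and the length bound $k\le \log_p|G|-1\le 1+p/2$ combined with the arithmetic $(k-1)\frac{p-1}{p^2}\le\frac{p-1}{2p}<\frac12$ lets Corollary~\ref{cor:p_groups_general} apply. The only difference is that you make the reduction to the faithful case and the splitness verification explicit, which the paper leaves implicit.
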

\begin{proof}
First note that, $p$ being a prime, the exponent $p$ property is equivalent to stating that every nontrivial group element has order $p$.
Clearly this property is inherited by quotients and subgroups. As a consequence, every K-triple is split and of type $(G_0,1,\tau)$, and this observation extends to all K-triples occurring in a full sequence associated to an irreducible representation $\pi$. 

Let $k$ denote the length of such a full sequence of K-triples. By equation (\ref{eqn:est_length}) and the assumption on $|G|$ we have 
\[ k \le \log_p |G| - 1 \le 1+p/2 ~.\] Using that all exponents in the sequence of $K$-triples equal one, the computation 
\begin{eqnarray*}
\sum_{i=2}^k \frac{p-1}{p^{2}} & = &  (k-1) \frac{p-1}{p^2} \\
& \le & (\log_p|G|-2) \cdot \frac{p-1}{p^2} \le \frac{p}{2} \cdot \frac{p-1}{p^2} < \frac{1}{2}~.
\end{eqnarray*}
shows that Corollary \ref{cor:p_groups_general} applies, and yields the result. 
\end{proof}

\begin{remark}
It is quite possible that the requirement concerning split K-triples reflects a shortcoming of our proof method, rather than being an objective obstacle for the phase retrieval property. Understanding the precise relationship between these properties remains to be investigated in future work. 

Given that it is currently unclear which irreducible representations $\pi$ of a general $p$-group have an associated sequences of K-triples that are also split, the true scope of Corollary  \ref{cor:p_groups_general} is not yet fully understood. However, Corollary \ref{cor:small_p_groups} exhibits one general class to which Corollary \ref{cor:p_groups_general} applies, and it is already quite sizeable: According to \cite{MR3416635}, the number of $p$-groups of order $p^8$ and exponent $p$, for primes $p >7$, is given by the formula
\[
 p^4+2p^3+20p^2+147p+ (3p+29) \gcd(p-1,3) + 5 \gcd(p-1,4)+1246~,
\] and Corollary \ref{cor:small_p_groups} guarantees the phase retrieval properties for arbitrary irreducible representations of these groups, as soon as $p\ge 13$. The enumeration and systematic construction of $p$-groups with specific properties is still an ongoing endeavour, and therefore the emergence of further general classes which can be treated with Corollary \ref{cor:p_groups_general} seems quite plausible. 

In any case, we expect that Corollary \ref{cor:p_groups_general} can be employed for the systematic construction of $p$-groups and associated representations with particular interesting properties. One such property is the nilpotency class of $G$: Observe that the size restriction $|G| \le p^{2+p/2}$ entails a restriction on the nilpotency class. However, we conjecture that for any prime number $p$ there exist $p$-groups $G$ of arbitrarily large nilpotency class and/or size, and irreducible representations of $G$ doing phase retrieval. A proof of this conjecture via Corollary \ref{cor:p_groups_general} would be achieved by constructing sequences of split K-triples of arbitrary length $k$, with sufficiently large exponents $r_1,\ldots, r_k$. 
\end{remark}

\section*{Concluding remarks}

While the statements of our main results were \textit{existence results} for vectors doing phase retrieval, the inductive methods employed to prove these results provide access to the explicit construction of such vectors. 

In the Lie group case, this can be formulated in a rather satisfactory way. If one follows the inductive construction of an arbitrary irreducible representation by repeated applications of Kirillov's lemma and induction of representations, the canonical identification of Hilbert spaces $L^2(\mathbb{R},L^2(\mathbb{R}^k)) \cong L^2(\mathbb{R}^{k+1})$ ultimately provides a realization of $\pi$ on $\mathbb{R}^m$ for a suitable $m\ge 1$. If one traces the inductive construction of the vector $\eta$ doing phase retrieval in Step 3, one can employ a Gaussian for $\eta_1$ at each step. In the identification $L^2(\mathbb{R},L^2(\mathbb{R}^k)) \cong L^2(\mathbb{R}^{k+1})$ one obtains $\eta (t,u) = \eta_1(t) \eta_2(u)$, and a simple induction yields that $\eta$ is a Gaussian, as well. 

This leads to the fundamental observation that every irreducible representation of an arbitrary simply connected, connected nilpotent Lie group has an explicitly computable realization on $L^2(\mathbb{R}^m)$, for suitable $m$, in which Gaussian vectors do phase retrieval. 

In the finite group case, the setting is somewhat more involved, even assuming that all elements in a full sequences of K-triples are already known to be split: Note that the corresponding induction step in Lemma \ref{lem:pr_inductive} requires choosing a vector $\eta_2$ that guarantees phase retrieval \textbf{and} $p_0(\eta_2, \tau) < 1/2$, and combining it with a vector $\eta_1$ doing phase retrieval for the Schr\"odinger representation. Here the proof of Lemma \ref{lem:est_p0} estimating $p_0(\eta_2,\tau)$ relies on the same procedure to construct $\eta$ from $\eta_1$ and $\eta_2$. 
That means that the vectors $\eta$ simultaneously guaranteeing the estimates in Lemma \ref{lem:est_p0} and phase retrieval can be constructed inductively as well, as soon as one can solve the following, currently open problem: Are there explicit constructions of vectors $\eta$ for the Schr\"odinger representation which simultaneously have the phase retrieval and full spark property?

\bibliographystyle{abbrv}
\bibliography{pr_nilpotent}

\begin{thebibliography}{10}

\bibitem{alaifari2020phase}
R.~Alaifari, F.~Bartolucci, and M.~Wellershoff.
\newblock Phase retrieval of bandlimited functions for the wavelet transform,
  2020.

\bibitem{MR4222533}
R.~Alaifari and M.~Wellershoff.
\newblock Stability estimates for phase retrieval from discrete {G}abor
  measurements.
\newblock {\em J. Fourier Anal. Appl.}, 27(2):Paper No. 6, 31, 2021.

\bibitem{MR2224902}
R.~Balan, P.~Casazza, and D.~Edidin.
\newblock On signal reconstruction without phase.
\newblock {\em Appl. Comput. Harmon. Anal.}, 20(3):345--356, 2006.

\bibitem{bartuselphase}
D.~Bartusel, H.~F\"uhr, and V.~Oussa.
\newblock Phase retrieval for affine groups over prime fields, 2021.

\bibitem{MR3500231}
I.~Bojarovska and A.~Flinth.
\newblock Phase retrieval from {G}abor measurements.
\newblock {\em J. Fourier Anal. Appl.}, 22(3):542--567, 2016.

\bibitem{MR4091190}
Y.~Chen, C.~Cheng, Q.~Sun, and H.~Wang.
\newblock Phase retrieval of real-valued signals in a shift-invariant space.
\newblock {\em Appl. Comput. Harmon. Anal.}, 49(1):56--73, 2020.

\bibitem{MR1070979}
L.~J. Corwin and F.~P. Greenleaf.
\newblock {\em Representations of nilpotent {L}ie groups and their
  applications. {P}art {I}}, volume~18 of {\em Cambridge Studies in Advanced
  Mathematics}.
\newblock Cambridge University Press, Cambridge, 1990.
\newblock Basic theory and examples.

\bibitem{MR4189292}
A.~Fannjiang and T.~Strohmer.
\newblock The numerics of phase retrieval.
\newblock {\em Acta Numer.}, 29:125--228, 2020.

\bibitem{MR1843717}
K.~Gr\"{o}chenig.
\newblock {\em Foundations of time-frequency analysis}.
\newblock Applied and Numerical Harmonic Analysis. Birkh\"{a}user Boston, Inc.,
  Boston, MA, 2001.

\bibitem{MR4094471}
P.~Grohs, S.~Koppensteiner, and M.~Rathmair.
\newblock Phase retrieval: uniqueness and stability.
\newblock {\em SIAM Rev.}, 62(2):301--350, 2020.

\bibitem{MR3012851}
E.~Kaniuth and K.~F. Taylor.
\newblock {\em Induced representations of locally compact groups}, volume 197
  of {\em Cambridge Tracts in Mathematics}.
\newblock Cambridge University Press, Cambridge, 2013.

\bibitem{MR3901918}
L.~Li, T.~Juste, J.~Brennan, C.~Cheng, and D.~Han.
\newblock Phase retrievable projective representation frames for finite abelian
  groups.
\newblock {\em J. Fourier Anal. Appl.}, 25(1):86--100, 2019.

\bibitem{MR3303677}
R.-D. Malikiosis.
\newblock A note on {G}abor frames in finite dimensions.
\newblock {\em Appl. Comput. Harmon. Anal.}, 38(2):318--330, 2015.

\bibitem{MR4121508}
R.~D. Malikiosis and V.~Oussa.
\newblock Full spark frames in the orbit of a representation.
\newblock {\em Appl. Comput. Harmon. Anal.}, 49(3):791--814, 2020.

\bibitem{MR3421917}
S.~Mallat and I.~Waldspurger.
\newblock Phase retrieval for the {C}auchy wavelet transform.
\newblock {\em J. Fourier Anal. Appl.}, 21(6):1251--1309, 2015.

\bibitem{MR648604}
D.~J.~S. Robinson.
\newblock {\em A course in the theory of groups}, volume~80 of {\em Graduate
  Texts in Mathematics}.
\newblock Springer-Verlag, New York-Berlin, 1982.

\bibitem{MR3416635}
M.~Vaughan-Lee.
\newblock Groups of order {$p^8$} and exponent {$p$}.
\newblock {\em Int. J. Group Theory}, 4(4):25--42, 2015.

\end{thebibliography}

\end{document}